\theoremstyle{plain}
\newtheorem{thm}{Theorem}[section]
\newtheorem{prop}[thm]{Proposition}
\theoremstyle{remark}
\newtheorem{rem}{Remark}
\newtheorem{definition}{Definition}
\newtheorem{assumption}{Assumption}
\begin{document}

\begin{frontmatter}

\title{Generalized Bernoulli Process and Fractional Binomial Distribution}
\runtitle{Generalized Bernoulli Process and Fractional Binomial Distribution}

\begin{aug}
\author[]{\fnms{Jeonghwa}~\snm{Lee}\ead[label=a]{leejb@uncw.edu}}
\address[]{Department of Mathematics and Statistics, University of North Carolina Wilmington, USA\printead[presep={,\ }]{a}}

\end{aug}

\begin{abstract}
Recently, a generalized Bernoulli process (GBP) was developed as a stationary binary sequence whose covariance function obeys a power law. In this paper, we further develop generalized Bernoulli processes, reveal their asymptotic behaviors, and find applications. 
We show that a GBP can have the same scaling limit as the fractional Poisson process.  
Considering that the Poisson process approximates the Bernoulli process under certain conditions, the connection we found between GBP and the fractional Poisson process is thought of as its counterpart under long-range dependence. When applied to indicator data, a GBP outperforms a Markov chain in the presence of long-range dependence.
Fractional binomial models are defined as the sum in GBPs, and it is shown that when applied to count data with excess zeros, a fractional binomial model outperforms zero-inflated models that are used extensively for such data in current literature.
\end{abstract}

\begin{keyword}
\kwd{Generalized Bernoulli process}
\kwd{Long-range dependence}
\kwd{Fractional Poisson process} \kwd{Count data with excess zeros}
\end{keyword}

\end{frontmatter}
\section{Introduction}
\label{sec:intro}

The binomial and Poisson models are used for counting a number of events. Because of their common key assumptions on the memoryless property and constant rate of events, the Poisson distribution can approximate the binomial distribution. In practice, the assumptions often fail, resulting in overdispersion/excess zeros in count data.
There have been numerous models developed in the past for such data, among them are
zero-inflated models and generalized linear-type models
\cite{Ske,Alt,Kad,Wed,Con,Lam,Con2}.  

Some models were developed through a  correlated structure between events. 
In \cite{Rod}, the Markov-correlated Poisson process and Markov-dependent binomial distribution were developed through Markov-dependent Bernoulli trials.
In \cite{Pbo}, a new class of correlated Poisson process and correlated weighted Poisson process was proposed with equicorrelated uniform random variables.

In \cite{Lee}, a generalized Bernoulli process (GBP) was defined as a stationary binary sequence whose  covariance function  decreases slowly with a power law. This is called long-range dependence (LRD), also called long-memory property, and a stochastic process with LRD shows different large-scale behavior than
 the ``usual" stationary process whose covariance function decays exponentially fast.  LRD has been observed in many fields such as hydrology, econometrics, earth science, etc, and there have been several approaches to define LRD.  For more information on LRD, see \cite{Sam}. The estimation method for parameters in GBP and its application  to earthquake data can be found in \cite{Lee2},

In 2003, Nick Laskin discovered the fractional Poisson process by the fractional Kolmogorov-Feller equation that governs the non-Markov dependence structure. In the fractional Poisson process, events no longer independently occur, and  inter-arrival time follows the first  family of Mittag-Leffler distribution which is heavy-tail distribution  \cite{Las}. Long-memory property of the fractional Poisson process was investigated in \cite{Bia}.

In this paper, we further develop generalized Bernoulli processes, investigate their asymptotic properties, and find applications. 
 It turns out that there is an interesting connection between a GBP and the fractional Poisson process. Both can have the same scaling limit to the second family of Mittag-Leffler distribution, and in both GBP and fractional Poisson process, the interarrival time follows a heavy-tail distribution. Therefore, the large-scale behavior of the GBP is similar to that of the fractional Poisson process, and the GBP is considered to be a discrete-time counterpart of the fractional Poisson process. 
 
In the application of GBPs to indicator data in economics, it is shown that a GBP outperforms a Markov-dependent Bernoulli process when LRD is present in the data. Fractional binomial models are defined as the sum of the first $n$ variables in the GBPs, and it is shown that when applied to count data with over-dispersion/excess zeros, a fractional binomial model outperforms zero-inflated models.

In Section 2, we will review the GBP proposed in \cite{Lee}, and define a new generalized Bernoulli process. In Section 3, we will compare the GBPs and the fractional Poisson process by examining their asymptotic properties. In Section 4, we will continue to compare these distributions through simulations. Section 5 shows the applications of the GBPs and fractional binomial models to real data. All the proofs and technical results can be found in Supplementary material.  
Throughout this paper, we assume that $i, i_0,i_2,\cdots \in \mathbb{N},  i', i_0',i_2',\cdots \in \mathbb{N} ,$  $i_0<i_1<i_2<\cdots,$ and $i_0'<i_1'<i_2'<\cdots,$ unless mentioned otherwise. For any set $A,$ $|A|$ is the number of elements in $A$, with $|\emptyset|=0.$ $a_n\sim b_n$ means $a_n/b_n \to 1$ as $n\to \infty.$ For notational convenience, we denote $P((\cap_i A_i)\cap ( \cap_j B_j))$ by $P(\cap_i A_i \cap_j B_j).$

\section{Generalized Bernoulli processes and fractional binomial distributions}\label{sec2}

\subsection{ Generalized Bernoulli process I and fractional binomial distribution I}
We first review the  generalized Bernoulli process $\{X_i, i\in \mathbb{N}\}$  developed in \cite{Lee}. We will call it the generalized Bernoulli process I (GBP-I) to differentiate it from what we will develop in this paper. The GBP-I, $\{X_i, i\in \mathbb{N}\}$, was defined  with parameters, $p,H,c$ that satisfy the following assumption.
\begin{assumption}

 $p,H \in (0,1),$ and \[
    0\leq c<\min\{1-p, \frac{1}{2} ( -2p+2^{2H-2} +\sqrt{4p-p2^{2H}+2^{4H-4}})\}.
\] \label{Assumption 2.1}
\end{assumption}
Under Assumption \ref{Assumption 2.1}, the GBP-I is well defined with the following probabilities.
\begin{align*}
 &P(X_i=1)=p, P(X_i=0)=1-p ,\end{align*} 
\begin{equation*}
       P(X_{i_0}=1, X_{i_1}=1, \cdots, X_{i_n}=1)=
p\prod_{j=1}^{n}(p+c|i_j-i_{j-1}|^{2H-2}), \end{equation*} 
and for any disjoint sets $A,B \subset \mathbb{N},$ 
\begin{align}
    &P( \cap_{i'\in B }\{X_{i'}=0\}  \cap_{i\in A }   \{X_{i}=1\} )=\sum_{k=0}^{|B|}\sum_{\substack{B'\subset B\\ |B'|=k}} (-1)^{k}P(\cap_{i\in B'\cup A}\{X_{i}=1\} ),
\end{align}
and
\begin{align}
    P(\cap_{i'\in B }\{X_{i'}=0\})&=1+\sum_{k=1}^{|B|}\sum_{\substack{B'\subset B\\ |B'|=k}} (-1)^{k}P(\cap_{i\in B'}\{X_{i}=1\} ).
\end{align}

The following operators were defined in  \cite{Lee} to express the above probabilities more conveniently. 

\begin{definition} 
Define the following operation on a set $ A=\{i_0, i_1, \cdots,i_n\}\subset N\cup \{0\}$ with $i_0< i_1< \cdots<i_n. $
\[  L_H(A)= \prod_{j=1,\cdots,n}\Big(p+c|i_j-i_{j-1}|^{2H-2}\Big)  .\]
If $A=\emptyset$, define $ L_H(A):=1/p$, and if $|A|=1, L_H(A):=1. $ 
\end{definition}

 \begin{definition}
 Define for disjoint sets, $A,B\subset \mathbb{N}\cup\{0\}$ with $ |B|=m>0,$  \begin{align*}
    D_H(A,B)=
    \sum_{j=0}^{|B|}\sum_{ \substack{B'\subset B\\ |B'|=j}}(-1)^{j} L_H(A\cup B').
 \end{align*} 
    If $B=\emptyset, D_H(A,B):=L_H(A).$
    
    \end{definition}

 Using the operators, the probabilities in the GBP-I can be expressed as
\begin{align*}
     P(  \cap_{i\in A}\{X_{i}=1\} \cap_{i'\in B}\{X_{i'}=0\})&= pD_H(A,B)
\end{align*}
for disjoint sets $A,B \subset \mathbb{N}.$
The GBP-I is a stationary process with covariance function
\[ Cov(X_i, X_j)=pc|i-j|^{2H-2}, i\neq j.\]

 When $H\in(.5,1),$  the GBP-I possesses LRD since $\sum_{i=1}^{\infty} Cov(X_1, X_i)=\infty.$
The sum of the first $n$ variables in the GBP-I was defined as the fractional binomial random variable whose mean is $np$, and if $H\in (.5,1),$ its variance is asymptotically proportional to $n^{2H}.$
We will call it fractional binomial distribution I, and denote it by $B_n(p,H,c)$ or simply $B_n$. When $c=0, $ $B_n(p,H,0)$ becomes the regular binomial random variable whose parameters are $n,p.$

\subsection{Generalized Bernoulli process II and fractional binomial distribution II  }
 We define the generalized Bernoulli process II (GBP-II),  $\{X_i^{(n)},i=1,2,\cdots, n\}$, $n \in \mathbb{N}$, that has three parameters $H,c,\lambda,$ and the fractional binomial random variable II, $B_n^{\circ}(H,c,\lambda)=\sum_{i=1}^n X_i^{(n)}.$ To ease our notation, $X_i$ and $X_{i}^{(n)}$ are used interchangeably, and $B_n^{\circ}$ will replace $B_n^{\circ}(H,c,\lambda)$ when there is no confusion.
For the GBP-II, it is assumed that the parameters satisfy the following condition.

\begin{assumption}
 $H \in(.5, 1), c \in (0,2^{2H-2}),$ and $ \lambda \in (0,c).$ \label{Assumption 2.2}
\end{assumption}

We will show that the GBP-II is well-defined under Assumption \ref{Assumption 2.2} with the following probabilities. For $n\in\mathbb{N},$  $\{X_i^{(n)},i=1,2,\cdots, n\}$ is defined with    $
    P(X_{i}=1)=p_n,  P(X_{i}=0)=1-p_n
    $ for $i=1,2,\cdots,n,$
 and for any  $1\leq  i_0<i_1 <i_2<\cdots<i_k 
 \leq n,$
\begin{equation}
   P(X_{i_0}=1, X_{i_1}=1 \cdots X_{i_k}=1)= p_nc^k|(i_1-i_0) \cdot (i_2-i_1) \cdots (i_k-i_{k-1})|^{2H-2}  \end{equation}  where
   $p_n=\lambda n^{2H-2}.$

\begin{definition}
Define the following operation on a set $ A=\{i_0, i_1, \cdots,i_k\}\subset \mathbb{N}\cup\{0\}, i_0<i_1<\cdots<i_k.$
\[  L_H^{\circ} (A)= \prod_{j=1,\cdots,k}|i_j-i_{j-1}|^{2H-2}  .\]
\end{definition}
If $|A|=1$, then we define $L_H^{\circ}(A):=1.$  If $|A|=0$ (when $A=\emptyset$), then   $L_H^{\circ}(A):=c/p_n.$
For example, if $A=\{1,2,4,7\},  L_H^{\circ}(A)=|(2-1)(4-2)(7-4)|^{2H-2}.$ \\
 \begin{definition}
 Define for disjoint sets $A,B$ such that $A,B\subset \mathbb{N}\cup\{0\}$ and $ B\neq \emptyset,$ \begin{align*}
    D_H^{\circ}(A,B) =\sum_{j=0}^{|B|}\sum_{ \substack{B'\subset B\\ |B'|=j}}(-1)^{j}c^{j}  L_H^{\circ}(A\cup B'). \end{align*}  If $B=\emptyset,$ then $D_H^{\circ}(A,B):=L_H^{\circ}(A).$
    \end{definition} 
  The joint probabilities in GBP-II is defined with (3) and (1-2) for any disjoint sets $A, B \subset \{1,2,\cdots,n\}$, i.e., by the inclusion-exclusion principle.
  This can be succinctly written as  \begin{align}
 P(\cap_{i\in A} \{X_{i}=1\}\cap_{i'\in B} \{X_{i'}=0\})&=p_nc^{|A|-1}D_H^{\circ}(A,B) .\end{align}

To show that the GBP-II is well-defined, we have to verify that $(4)\geq 0$ for any disjoint sets $A,B \subset \{1,2,\cdots,n\},$ for any $n\in \mathbb{N}.$

\begin{prop}
Under Assumption \ref{Assumption 2.2}, for any $n\in \mathbb{N}$ and any disjoint sets $A,B\subset \{1,2,\cdots, n\},$ 
\[ p_nc^{|A|-1}D_H^{\circ}(A,B)>0.\] \label{Prop}
\end{prop} 
From the result of Proposition \ref{Prop}, the GBP-II is well defined stationary  binary sequence whose correlation function obeys a power law asymptotically.

\begin{thm} For any $n\in \mathbb{N},$ $\{X_i^{(n)}, i=1,2,\cdots, n \}$ defined with (4) under Assumption \ref{Assumption 2.2} is a stationary process   with $P(X_{i}=1)=p_n,P(X_{i}=0)=1-p_n,$ and
\\{\it i)} \[Cov(X_i^{(n)}, X_j^{(n)})=p_nc|i-j|^{2H-2}-p_n^2,\]
for $i\neq j, i,j=1,2,\cdots,n.$
\\{\it ii)} For any $i,j \in \mathbb{N},$ 
\begin{align*}
\lim_{n\to \infty}Corr(X_i^{(n)}, X_j^{(n)})= c|i-j|^{2H-2}.\end{align*}
  \\{\it iii)} Define $B_n^{\circ}=\sum_{i=1}^n X_i,$ then    $E(B_n^{\circ})=\lambda n^{2H-1}$, and as $n\to \infty,$
  \begin{align*}
      E((B_n^{\circ})^2)&\sim \frac{\lambda c}{(2H-1)H} n^{4H-2},\\
      Var(B_n^{\circ})&\sim n^{4H-2}  \Bigg(\frac{\lambda c}{(2H-1)H}-\lambda^2\Bigg).
  \end{align*}
  \label{Theorem 2.4}
\end{thm}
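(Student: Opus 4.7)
The plan is to prove the statement by unpacking it into five essentially independent pieces: (a) stationarity, (b) the marginal $P(X_i=1)=p_n$, (c) the covariance formula for $i\neq j$, (d) the correlation limit, and (e) the asymptotics of $E(B_n^{\circ})$, $E((B_n^{\circ})^2)$, $\mathrm{Var}(B_n^{\circ})$. Parts (a)--(c) follow directly from the defining probability (3) together with (6); for stationarity, I would observe that the right-hand side of (3) only depends on the consecutive gaps $i_j-i_{j-1}$, so all joint probabilities of the form $P(\cap_{i\in A}\{X_i=1\}\cap_{i'\in B}\{X_{i'}=0\})$, being obtained from the inclusion--exclusion (4)--(5), are translation-invariant inside $\{1,\ldots,n\}$. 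The marginal follows by taking $k=0$ in (3), and the covariance follows from $E(X_iX_j)=P(X_i=1,X_j=1)=p_nc|i-j|^{2H-2}$ minus $p_n^2=E(X_i)E(X_j)$.

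For the correlation limit, I would write
\[
\mathrm{Corr}(X_i^{(n)},X_j^{(n)})=\frac{p_nc|i-j|^{2H-2}-p_n^2}{p_n(1-p_n)}=\frac{c|i-j|^{2H-2}-p_n}{1-p_n},
\]
and note that $p_n=\lambda n^{2H-2}\to 0$ as $n\to\infty$ because $2H-2<0$ under Assumption \ref{Assumption 2.2}. The expectation $E(B_n^{\circ})=np_n=\lambda n^{2H-1}$ is then immediate.

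The main work is the asymptotic of $E((B_n^{\circ})^2)$. I would expand
\[
E((B_n^{\circ})^2)=\sum_{i=1}^n p_n+2\sum_{1\le i<j\le n}p_nc(j-i)^{2H-2}=np_n+2p_nc\sum_{k=1}^{n-1}(n-k)k^{2H-2}.
\]
The double sum reduces via $\sum_{1\le i<j\le n}(j-i)^{2H-2}=\sum_{k=1}^{n-1}(n-k)k^{2H-2}$, which for $H\in(1/2,1)$ can be handled either by a Riemann-sum recasting
\[
n^{-2H}\sum_{k=1}^{n-1}(n-k)k^{2H-2}\longrightarrow \int_0^1(1-x)x^{2H-2}\,dx=\frac{1}{(2H-1)}-\frac{1}{2H}=\frac{1}{2H(2H-1)},
\]
or by splitting $(n-k)k^{2H-2}=nk^{2H-2}-k^{2H-1}$ and applying the standard asymptotics $\sum_{k=1}^{n-1}k^{\alpha}\sim n^{\alpha+1}/(\alpha+1)$ for $\alpha>-1$. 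Either route gives $\sum_{k=1}^{n-1}(n-k)k^{2H-2}\sim n^{2H}/[2H(2H-1)]$. Multiplying by $2p_nc=2\lambda c n^{2H-2}$ produces the leading term $\lambda c n^{4H-2}/[H(2H-1)]$, and the $np_n=\lambda n^{2H-1}$ contribution is negligible since $2H-1<4H-2$ whenever $H>1/2$. The variance asymptotic then follows by subtracting $(E(B_n^{\circ}))^2=\lambda^2 n^{4H-2}$, which is of the same order.

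The only delicate point is the uniform control of the integrable singularity at $k=0$ in the Riemann-sum convergence (since $x^{2H-2}$ blows up at $0$ when $2H-2<0$); I would handle this by isolating the first few terms of the sum, bounding them by $O(n)$ (which is $o(n^{2H})$ because $2H>1$), and applying dominated convergence on the remainder where $k/n$ is bounded away from $0$. Aside from this standard tail estimate, every step is a direct computation from the definitions and the already-established fact (Lemma \ref{Lemma 2.3}) that the probabilities (6) are nonnegative, so no further consistency check is needed.
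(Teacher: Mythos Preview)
Your proposal is correct and follows essentially the same approach as the paper: the paper's proof likewise derives stationarity and the covariance/correlation directly from (6), writes $\mathrm{Corr}(X_i^{(n)},X_j^{(n)})=(c|i-j|^{2H-2}-p_n)/(1-p_n)$ and lets $p_n\to 0$, and then dismisses the $B_n^{\circ}$ asymptotics as ``easily derived.'' Your explicit computation of $\sum_{k=1}^{n-1}(n-k)k^{2H-2}\sim n^{2H}/[2H(2H-1)]$ via either the Riemann-sum/Beta-integral route or the $\sum k^{\alpha}\sim n^{\alpha+1}/(\alpha+1)$ splitting simply supplies those omitted details, and is exactly the method the paper itself uses later (Lemma~6.3) for the higher moments.
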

We call the stationary process $\{X_i^{(n)}, i=1,2,\cdots, n \}$ defined in Theorem \ref{Theorem 2.4} the generalized Bernoulli process II (GBP-II). Also, the sum of the first $n$ variables in the GBP-II,  $B_n^{\circ}$, is called the fractional binomial random variable II.
\begin{rem}
If we use a correlation function and its asymptotic behavior to define long-range dependence (LRD), then we conclude that the GBP-II always possesses a long-memory property, since \[  \sum_{i=1}^n corr(X_1^{(n)}, X_i^{(n)})\sim (c/(2H-1)-\lambda) n^{2H-1} ,\]  therefore, \[  \lim_{n\to \infty}\sum_{i=1}^n corr(X_1^{(n)}, X_i^{(n)})=\infty. \]
Alternatively, one can define LRD in the GBP-II using the asymptotic behavior of its covariance function. Since $\sum_{i=1}^n Cov(X_1^{(n)}, X_i^{(n)}) \sim (c\lambda/(2H-1)-\lambda^2) n^{4H-3},$ if $H\in (3/4,1),$ then \[\lim_{n\to \infty} \sum_{i=1}^n Cov(X_1^{(n)}, X_i^{(n)}) =\infty ,\] and the GBP-II is considered to have LRD when $H\in (3/4,1)$. 

\end{rem}

\section{GBP and  fractional Poisson process}
\subsection{Comparison between  the GBP-II and the fractional Poisson process}
We will show the connection between GBP-II and  the fractional Poisson process
by using the moment generating function (mgf).
First, we modify the GBP-II, and define $\{X_i^*, i =1,2,\cdots\}$ such that
\begin{align*}
    P( X_i^{*}=1)=ci^{2H-2}=cL_H^{\circ}(\{0,i\}),\end{align*} 
 \begin{align*}
P( X_{i_1}^{*}=1,X_{i_2}^{*}=1\cdots X_{i_k}^{*}=1 )&= c^k|i_1(i_2-i_1)(i_3-i_2)\cdots(i_k-i_{k-1})|^{2H-2}\\&=c^kL_H^{\circ}(\{0, i_1,\cdots,i_k\}),
\end{align*}
and in general, for $A,B\subset \mathbb{N}, A\cap B=\emptyset, $  \begin{align}
 P(\cap_{i\in A} \{X_{i}^{*}=1\}\cap_{i'\in B} \{X_{i'}^{*}=0\})&=c^{|A|}D_H^{\circ}(A\cup\{0\},B). \end{align}
Note that $D_H^{\circ}(A\cup\{0\},B)=D_H^{\circ}(A^{(1)}\cup\{1\},B^{(1)})$ where $A^{(j)}=\{ i+j :i\in A\}$ and $ B^{(j)}=\{ i+j :i\in B\} .$
Therefore, by Proposition \ref{Prop}, $(5)>0$, and  $\{X_i^{*}, i =1,2,\cdots\}$ is well defined, which we will call GBP-II$^*$.

Roughly speaking, $\{X_i^{*}, i =1,2,\cdots\}$ can be considered as what is observed after the first 1 in the GBP-II for large $n$. In fact, the probability distribution (5) is the limiting distribution of the conditional probability of the GBP-II that is observed after the first observation of "1".
If we define a random variable $T_0$ as the time when the first "1" appears in the GBP-II, then for any $t,i\in \mathbb{N},$
\[\lim_{n\to \infty}{P(X_{i+t}^{(n)}=1| T_0=t)}=ci^{2H-2} ,\]
and for $A,B\subset \mathbb{N}, A\cap B=\emptyset, $  \begin{align*}
\lim_{n\to \infty} P(\cap_{i\in A^{(t)}} \{X_{i}^{(n)}=1\}\cap_{i'\in B^{(t)}} \{X_{i'}^{(n)}=0\}|T_0=t)&=c^{|A|}D_H^{\circ}(A^{(t)}\cup\{t\},B^{(t)})\\&=c^{|A|}D_H^{\circ}(A\cup\{0\},B), \end{align*}
 which is the same as the distribution of $\{X_i^*, i\in\mathbb{N}\}$ defined in (5).

Define $B_n^{\circ,{*}}(H,c) $ as the sum of the first $n$ variables in the GBP-II$^*$, 
\[   B_n^{\circ,{*}}= \sum_{i=1}^{n}X_i^{*} ,\] and call it the fractional binomial random variable  II$^*.$
 In both of the fractional binomial distributions-II, II$^*$,  the moments are asymptotically proportional to the powers of $n^{2H-1}.$

\begin{thm} For any $k\in \mathbb{N},$
as $n \to \infty,$ \\ {\it i)} the fractional binomial II has
\[ E(({B_n^{\circ}})^k)\sim c_k n^{(2H-1)k},\] where
$c_k=\frac{k!\lambda(c\Gamma(2H-1))^{k-1}}{\Gamma((2H-1)(k-1)+2)},$
\\ {\it ii)} the fractional binomial II$^*$ has
\[ E(({B_n^{\circ,{*}}})^k)\sim c_k^* n^{(2H-1)k},\] where
$c_k^*=\frac{k!(c\Gamma(2H-1))^{k}}{\Gamma((2H-1)k+1)}.$
\label{Theorem 3.1}
\end{thm}

It turns out that a scaled fractional binomial II$^*$ has the same limiting distribution as a scaled fractional Poisson distribution. 
More specifically, the scaled fractional binomial II$^*$ and scaled fractional Poisson, 
$B_n^{\circ,{*}}(H,c)/n^{2H-1} $ and $N_{2H-1, c\Gamma(2H-1)}(n)/n^{2H-1},$ converge in distribution to the  second family of Mittag-Leffler random variable of order $2H-1,$ as $n\to \infty.$  \begin{thm} For the fractional binomial distribution II$^*$ with parameters $H,c$ that satisfy Assumption 2, and the fractional Poisson process with parameters $\mu=2H-1, \nu=c\Gamma(2H-1), $  the following holds:  Both   $B_n^{\circ,{*}}/n^{\mu}$  and  $N_{\mu,\nu}(n)/n^{\mu}$  converge in distribution, as $n\to \infty,$ to the second family of Mittag-Leffler random variable of order $\mu$,   $X_{\mu}$,
whose mgf is Mittag-Leffler function, i.e., \[E(e^{tX_{\mu}})=E_{\mu}(\nu t) \text{ for } t\in \mathbb{R}, \]  
where
\[E_{\mu}(z)=\sum_{k=0}^\infty \frac{z^k}{\Gamma(\mu k+1)}.\]  
\label{Theorem 3.3}
\end{thm}

 \subsection{Comparison between the GBP-I and the GBP-II}
We will compare the asymptotic properties of GBPs through asymptotic moments and tail behavior of return time. 
 
First, we define GBP-I$^*$  in a similar way that we defined the GBP-II$^*$.
GBP-I$^*$,  $\{X_i^*,i=1,2,\cdots\}$, is what is observed after the first ``1" in the GBP-I. Then it has 
\[
P(X_i^*=1)=P(X_{i+t}=1|{T_0}=t)=p+ci^{2H-2},\]
    \begin{align*} P( \cap_{i\in A}X_i^*=1\cap_{i'\in B}X_{i'}^*=1  )&=P(\cap_{i\in A}X_{i+t}=1\cap_{i'\in B}X_{i'+t}=1|{T_0}=t)\\&=D_H(A\cup\{0\},B),
\end{align*} for $A,B\in \mathbb{N}, A\cap B=\emptyset,$
where $T_0$ is the first time when ``1" appeared in the GBP-I. We also define $ B_n^*(p,H,c)$ as the sum of the first $n$ variables in the GBP-I$^*$, and call it the fractional binomial random variable I$^*$. 

\begin{thm}
For the fractional binomial I, $B_n,$ and the fractional binomial I$^*,$ $B_n^*,$ we have the following asymptotic properties as $n\to \infty. $
 \\
{\it i)} For the fractional binomial I,  the central moments are
\begin{align*}
 E((B_n-np)^2)&\sim\begin{dcases}
    b_2^{(1)} n  &\text{ if $H\in(0, .5),$}\\
   b_2^{(2)} n\ln{n}  &\text{ if $H = .5,$} \\
    b_2^{(3)} n^{2H}  &\text{ if $H\in(.5, 1),$}
\end{dcases} \\
  E((B_n-np)^k)&\sim  
   b_k n^{2H-2+k}   \text{ for $ k=3,4,\cdots,$ and $H \in(0, 1),$ }  \end{align*}
where $ b_2^{(1)}=  p(1-p)+2pc/(1-2H), b_2^{(2)}=  2pc,  b_2^{(3)}= {pc  }/{(H(2H-1))}, b_k=  k(k-1)cp(-p)^{k-2}/((2H-2+k)(2H-3+k) ),$   and $\lceil k/2 \rceil$ is the largest integer smaller than or equal to $k/2$. \\
{\it ii)} For the fractional binomial I$^*,$
\begin{align*}
 E((B_n^*-np)^2)&\sim\begin{dcases}b_2^{(1)} n  &\text{ if $H\in(0, .5),$}\\
   b_2^{(2)} n\ln{n}  &\text{ if $H = .5,$} \\
    b_2^{(3,*)} n^{2H}  &\text{ if $H\in(.5, 1),$}
\end{dcases} \\
 E((B_n^*-np)^k)&\sim  
  b_k^* n^{2H-2+k}    \text{ for $ k=3,4,\cdots,$ and $H \in(0, 1),$}  \end{align*}
where 
$b_2^{(3,*)}= {pc  }/{(H(2H-1))}-pc/H $ and $b_k^*=kc(-p)^{k-1}/(2H-2+k ) +k(k-1)cp(-p)^{k-2}/((2H-2+k)(2H-3+k) )
.$
\\

For the non-central moments, $E(B_n)=np,$ $  E(B_n^*)\sim np,$ and  
  $E(B_n^k)\sim n^kp^k,$ 
  $E((B_n^*)^k)\sim n^kp^k
$ for $k\geq 2.$ 
 

\end{thm}

Let's consider the number of 0's between two successive 1's plus one as a return time, also called interarrival time, and denote the return time from the $(i-1)^{th}$ 1 to  the $i^{th}$ 1  in the GBP-I as $T_i.$
In the GBP-I, return times, $\{T_i,i=2,3,\cdots,\},$ are i.i.d. with
\begin{align*}
 P(T_i=k)&=\begin{cases}D_H(\{1,k+1\},\{2,3,\cdots,k\}) &\text{ for } k\in \mathbb{N}/\{1\},\\L_H(\{1,2\}) &\text{ for } k=1. \end{cases}\end{align*} We will drop the subscript and use $T$ as a random variable for return time in the GBP-I.

In the GBP-II  $\{X_i^{(n)}\}$, we define a return time in the same way and denote $ T_i^{(n)}$ as the return time to the $i^{th}$ 1. Note that 
$ T_i^{(n)}, i=2,3,\cdots,$ are independent but not identically distributed, since the length of the sequence  $\{X_i^{(n)},i=1,\cdots,n\}$ is fixed as $n$. However, they are asymptotically i.i.d. when $n\to \infty,$ with its limiting distribution  
\begin{align}
 \lim_{n\to \infty}P( T_i^{(n)}=k)=\begin{cases}cD_H^{\circ}(\{1,k+1\},\{2,3,\cdots,k\}) &\text{ for } k\in \mathbb{N}/\{1\},\\c L_H^{\circ}(\{1,2\}) &\text{ for } k=1, \end{cases}  \end{align}
which is in fact the distribution of return time  $T^{\circ}$ in the GBP-II$^*$, if we define return time in the GBP-II$^*$, $\{X_i^{*}, i \in \mathbb{N}\},$  in the same way.  In the GBP-II$^*$, denote $T_i^{\circ}$ as the return time to the $i^{th}$ 1, it is easily derived that return times $T_1^{\circ}, T_2^{\circ}, \cdots$ are i.i.d. with distribution (6).

It is found that the return time of the GBP-I has a finite mean,  whereas the return time of the GBP-II$^*$ has an infinite mean. Also, the return time follows heavy-tail distribution in both the GBP-I and the GBP-II$^*$ with
 \[P(\text{return time}>t) \sim c t^{-\alpha} \]
for a large $t$, some constant $c$, and $\alpha \in (1,3)$ for the GBP-I, and $\alpha \in (0,1) $ for the GBP-II.

\begin{thm}In the GBP-I,
$E(T)=1/p.$ 
If $H\in (0, .5), var(T)<\infty,$ and
if  $H\in [.5, 1),   var(T)=\infty.$ 
Furthermore,\[ P(T>t)=t^{2H-3} L_1(t), \] for $H \in (0,1),$ where $L_1$ is a slowly varying function that depends on the parameters $H, p, c$ of the GBP-I.\end{thm}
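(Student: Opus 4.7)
The plan is to relate the tail of $T$ to the ``all-zero'' probabilities $p_0(n):=P(X_1=\cdots=X_n=0)$, to obtain a closed-form ordinary generating function for $p_0(n)$, and then to extract the tail by singularity analysis at $z=1$. The mean $E(T)=1/p$ will fall out of a telescoping identity, and the variance dichotomy will be an immediate consequence of the tail asymptotic $P(T>t)\sim t^{2H-3}$.

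First, splitting on $X_1$ and using stationarity,
\[
P(X_1=1,X_2=0,\ldots,X_{t+1}=0)=P(X_2=0,\ldots,X_{t+1}=0)-p_0(t+1)=p_0(t)-p_0(t+1),
\]
so $P(T>t)=(p_0(t)-p_0(t+1))/p$ and $E(T)=(1-\lim_n p_0(n))/p=1/p$ once $p_0(n)\to 0$ is established (which will fall out of the tail estimate). To compute $P_0(z):=\sum_{n\ge 0}p_0(n)z^n$, I will apply inclusion--exclusion with $P(\cap_{i\in A}\{X_i=1\})=pL_H(A)$, switch to gap variables $j_r:=i_r-i_{r-1}$, and recognize a geometric series in the transfer $w(z):=pz/(1-z)+c\,\mathrm{Li}_{2-2H}(z)$, where $\mathrm{Li}_s(z):=\sum_{j\ge 1}j^{-s}z^j$ is the polylogarithm. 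The bookkeeping collapses to
\[
P_0(z)=\frac{1+c\,\mathrm{Li}_{2-2H}(z)}{1-(1-p)z+c(1-z)\mathrm{Li}_{2-2H}(z)},
\]
which at $c=0$ reduces to $(1-(1-p)z)^{-1}$, recovering $p_0(n)=(1-p)^n$ as in the i.i.d.\ case.

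For the tail, form $Q(z):=\sum_{t\ge 0}P(T>t)z^t=(1-(1-z)P_0(z))/(zp)$. Using the classical expansion $\mathrm{Li}_{2-2H}(z)=\Gamma(2H-1)(1-z)^{1-2H}+\zeta(2-2H)+O(1-z)$ for $H\ne 1/2$ (and $\mathrm{Li}_1(z)=-\log(1-z)$ at $H=1/2$), and noting that the denominator above tends to $p>0$ at $z=1$, substitution yields
\[
Q(z)=\frac{1}{p}+(\text{analytic at }z=1)-\frac{c\,\Gamma(2H-1)}{p^2}(1-z)^{2-2H}+o\bigl((1-z)^{2-2H}\bigr).
\]
Flajolet--Odlyzko singularity transfer (the non-integer exponent puts this into a standard $\Delta$-domain setting) then gives
\[
P(T>t)\sim-\frac{c\,\Gamma(2H-1)}{p^2\,\Gamma(2H-2)}\,t^{2H-3}=\frac{c(2-2H)}{p^2}\,t^{2H-3}
\]
via $\Gamma(2H-1)=(2H-2)\Gamma(2H-2)$. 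The borderline $H=1/2$ is covered by the explicit identity $[z^t](1-z)\log(1-z)=1/(t(t-1))$, giving the same $t^{-2}$ tail without a logarithmic factor. Hence $P(T>t)=t^{2H-3}L_1(t)$ with $L_1$ slowly varying, and the variance claim follows from $E(T^2)=\sum_t(2t+1)P(T>t)\sim\sum_t t^{2H-2}$, which is finite iff $H<1/2$.

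The principal obstacle will be the rigorous derivation of $P_0(z)$: the inclusion--exclusion double sum is only conditionally convergent, so the interchange of summations and collapse to a geometric series in $w(z)$ will require absolute-convergence bounds using Assumption~\ref{Assumption 2.1}. A secondary technical step is to exclude spurious zeros of the denominator $1-(1-p)z+c(1-z)\mathrm{Li}_{2-2H}(z)$ in a $\Delta$-domain around $z=1$, which is needed for the Flajolet--Odlyzko transfer to apply.
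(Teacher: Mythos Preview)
Your approach is correct and reaches the same constant $P(T>t)\sim c(2-2H)p^{-2}\,t^{2H-3}$ that the paper obtains, but the route is genuinely different. The paper works entirely in the time domain via classical renewal theory: it quotes Feller for $E(T)=1/p$ (from $\lim_n P(X_n=1\mid X_1=1)=p$), reads off the variance dichotomy from Feller's identity $\sum_i\mathrm{Cov}(I_{\{X_1=1\}},I_{\{X_i=1\}})=\tfrac12 p^3(E(T^2)-3E(T))$, and for the tail manipulates the first-passage recursion $p_{0,n}=u_n-\sum_{k<n}p_{0,k}u_{n-k}$ with $u_n=p+cn^{2H-2}$ directly, Taylor-expanding $(1-(n-k)^{2H-2}/n^{2H-2})$ to extract the leading term. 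Your route is the $z$-domain analogue: a closed-form $Q(z)$ followed by Flajolet--Odlyzko transfer.

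Two remarks that would streamline your argument. First, the passage through $P_0(z)$ is a detour: the renewal identity $U(z)(1-F(z))=1$ with $U(z)=\sum_{n\ge0}P(X_{n+1}=1\mid X_1=1)z^n=1+w(z)$ gives $Q(z)=(1-F(z))/(1-z)=1/((1-z)(1+w(z)))=1/D(z)$ in one line, bypassing the inclusion--exclusion swap you flag as the principal obstacle. Second, since $Q(z)=1/D(z)$ with nonnegative coefficients and $D(1)=p>0$, Pringsheim's theorem guarantees the dominant singularity is at $z=1$ and that $D$ has no zero on $[0,1)$; the $\Delta$-domain extension then reduces to checking that $D(z)\ne0$ in a slit neighbourhood of $1$, which follows from $D(1)=p$ and continuity of the polylog on the cut plane. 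What your approach buys is an explicit generating function and a clean, uniform treatment of all $H\in(0,1)$; what the paper's approach buys is that it avoids any complex-analytic machinery and stays within Feller's elementary framework.
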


\begin{thm}
In the GBP-II,  $E(T^{\circ})=\infty, $ and
\[P(T^{\circ}>t  )= t^{1-2H} L_2(t),\] for
$H\in (0.5,1),$
where $L_2$ is a slowly varying function that depends on the parameters $H,\lambda, c$ of the GBP-II.
\end{thm}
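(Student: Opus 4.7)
The plan is to reduce the tail probability $P(T^\circ > t)$ to a combinatorial sum, assemble it into a generating function that admits a closed form, analyze its singularity at $s=1$, and then apply Karamata's Tauberian theorem. First I observe that $P(T^\circ > t) = P(X_1^* = X_2^* = \cdots = X_t^* = 0)$, which by the definition of GBP-II$^*$ equals $D_H^\circ(\{0\},\{1,2,\ldots,t\})$. Expanding the operator and using the parametrization $j_1 = i_1$, $j_\ell = i_\ell - i_{\ell-1}$ for $\ell \geq 2$, this becomes
\[
P(T^\circ > t) \;=\; \sum_{k=0}^{t}(-c)^k \sum_{\substack{j_1,\ldots,j_k\geq 1 \\ j_1+\cdots+j_k\leq t}} j_1^{2H-2}\, j_2^{2H-2}\cdots j_k^{2H-2}.
\]
Setting $a_m := \sum_{k\geq 0}(-c)^k \sum_{j_1+\cdots+j_k=m,\, j_i\geq 1}\prod j_i^{2H-2}$, I get $P(T^\circ > t) = \sum_{m=0}^t a_m$, a convolution-type structure.

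Next I compute the ordinary generating function $Q(s) := \sum_{t\geq 0} P(T^\circ > t)\, s^t$. Defining $\tilde g(s) := \sum_{n\geq 1} n^{2H-2} s^n$, the geometric-series identity yields $\sum_m a_m s^m = 1/(1 + c\tilde g(s))$ on a neighborhood of $0$, extendable to $|s|<1$ by continuity, and summation-by-parts gives
\[
Q(s) \;=\; \frac{1}{(1-s)\bigl(1+c\tilde g(s)\bigr)}.
\]
The standard asymptotic for the polylogarithm-type series, obtained by comparison with $\int_0^{\infty} x^{2H-2} e^{-(1-s)x}\,dx$, gives $\tilde g(s) \sim \Gamma(2H-1)\,(1-s)^{1-2H}$ as $s\to 1^-$, which diverges precisely because $H>1/2$. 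Substituting yields
\[
Q(s) \;\sim\; \frac{1}{c\,\Gamma(2H-1)}\,(1-s)^{2H-2} \qquad \text{as } s\to 1^-.
\]

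Since $\{P(T^\circ > t)\}_{t\geq 0}$ is non-negative and non-increasing, Karamata's Tauberian theorem applies with index $\rho = 2-2H \in (0,1)$, producing
\[
P(T^\circ > t) \;\sim\; \frac{t^{1-2H}}{c\,\Gamma(2H-1)\,\Gamma(2-2H)},
\]
which is exactly of the form $t^{1-2H} L_2(t)$ with $L_2(t)$ tending to a positive constant, hence slowly varying. Because $1-2H > -1$, the tail is not summable, so $E(T^\circ) = \sum_{t\geq 0} P(T^\circ > t) = \infty$. The main obstacle is the rigorous passage from the formal alternating series $\sum_k (-c)^k \tilde g(s)^k$ to the closed form $1/(1+c\tilde g(s))$: for $H>1/2$ the individual inclusion-exclusion terms are not absolutely summable in the limit $s\to 1$, so I would justify the identity by verifying convergence on a disk around $s=0$ (where $c\tilde g(s)$ is small), matching coefficients with the combinatorial sum $a_m$, and then invoking analytic continuation on the slit disk $\{|s|<1\}\setminus\{s: 1+c\tilde g(s)=0\}$, noting that $1 + c\tilde g(s) > 0$ for real $s\in[0,1)$ so no singularities arise on the approach to $s=1^-$ along the real axis, which is all that Karamata's theorem requires.
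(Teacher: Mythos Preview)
Your argument is correct and takes a genuinely different route from the paper. The paper does not compute $P(T^\circ>t)$ directly at all: instead it invokes Theorem 3.3 (the convergence $B_n^{\circ,*}/n^{2H-1}\Rightarrow X_\mu$), uses the renewal duality $P(B_n^{\circ,*}<kn^{2H-1})=P(\sum_{i=1}^{kn^{2H-1}}T_i^\circ>n)$ to deduce that $n^{-1/(2H-1)}\sum_{i\le n}T_i^\circ$ has a non-degenerate limit, and then appeals to the extended (stable) central limit theorem to conclude that the common law of the i.i.d.\ $T_i^\circ$ must have a tail regularly varying with index $1-2H$. So the paper's proof is essentially a one-line corollary of the heavier Theorem 3.3 plus stable-law machinery.

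Your approach, by contrast, is self-contained and more explicit: you read $P(T^\circ>t)=D_H^\circ(\{0\},\{1,\ldots,t\})$ directly from the definition of GBP-II$^*$, package the inclusion--exclusion into the renewal-type generating function $Q(s)=1/((1-s)(1+c\tilde g(s)))$, use the polylogarithm asymptotic $\tilde g(s)\sim\Gamma(2H-1)(1-s)^{1-2H}$, and finish with Karamata plus monotone density. This yields the sharper statement $P(T^\circ>t)\sim \bigl(c\,\Gamma(2H-1)\Gamma(2-2H)\bigr)^{-1}t^{1-2H}$, pinning down the slowly varying function as an explicit constant, which the paper's argument does not. Your justification of the generating-function identity can in fact be simplified: since $a_0=1$ and $a_m=-P(T^\circ=m)$ for $m\ge1$, the series $\sum a_m s^m$ is absolutely convergent on $|s|\le1$, and the formal power-series identity $\sum a_m s^m=1/(1+c\tilde g(s))$ then holds on all of $[0,1)$ without any analytic-continuation argument.
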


\section{Simulation}
In this section, we examine and compare the shape of the fractional binomial distributions (FB) and the fractional Poisson distribution through simulations. Each histogram in Figures 1-4 was made of 3000 simulated random variables of the corresponding distribution.   Figure \ref{Figure 1} shows the histograms of the scaled FB-II$^*$, $B_n^{\circ,{*}}/n^{2H-1},$  and the scaled fractional Poisson random variable, $N_{\mu,\nu}(n)/n^{2H-1}, $ for various parameters $H,c, n,  \mu=2H-1,\nu=c\Gamma(2H-1).$ It is observed that when $n=50,$ the histograms of the scaled FB-II$^*$  and the scaled fractional Poisson random variable are fairly close to the pdf of the second family of Mittag-Leffler distribution. The approximation is better for larger $n.$  This result reflects well Theorem \ref{Theorem 3.3}.   

Figure \ref{Figure 2} shows the histograms of the FB-II, II$^*$ and the fractional Poisson distribution for various parameters. It is seen that the histograms of FB-II$^*$ and the fractional Poisson largely overlap, which is not surprising given the fact that their scaled distributions have the same limiting distribution. However, the FB-II behaves quite differently than the other two distributions as it has a high peak near 0.    

The FB-I, I$^*$ and the binomial distribution are compared in Figure \ref{Figure 3}. The binomial distribution is roughly symmetric and bell-shaped for each set of parameters as expected from the central limit theorem, whereas the FBs show various shapes and larger variability than the binomial distribution. Unlike the FB-I$^*$, the FB-I  has a large probability near 0, a similar phenomenon observed in  Figure \ref{Figure 2} for the FB-II.

Figure \ref{Figure 4} are the combined results of Figures \ref{Figure 2}, \ref{Figure 3}, putting together the histograms of the binomial, FB-I$^*$, II$^*$, and fractional Poisson distributions for each set of parameters. The FB- II$^*$ and the fractional Poisson distributions are similar to each other, and the shape and the range of these distributions are neither close to the binomial nor the FB- I$^*$.
\begin{figure}[htp]
    \centering
\includegraphics[width=.4\textwidth]{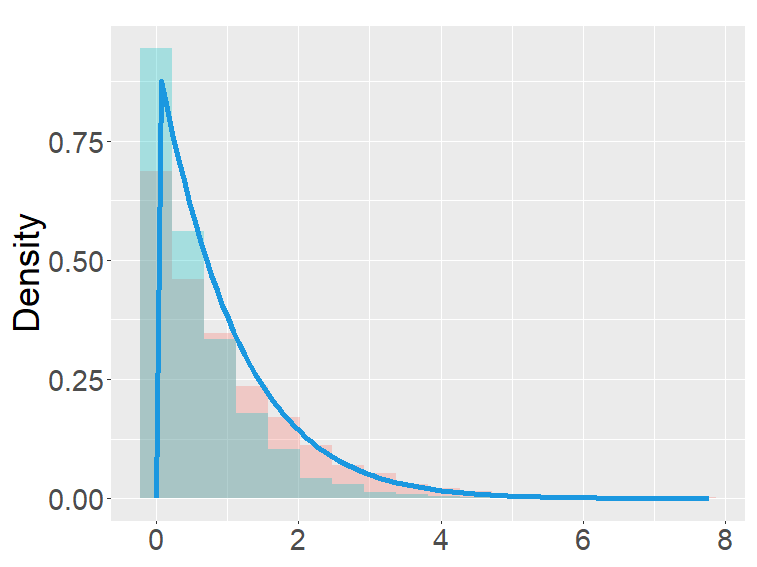}
\includegraphics[width=.4\textwidth]{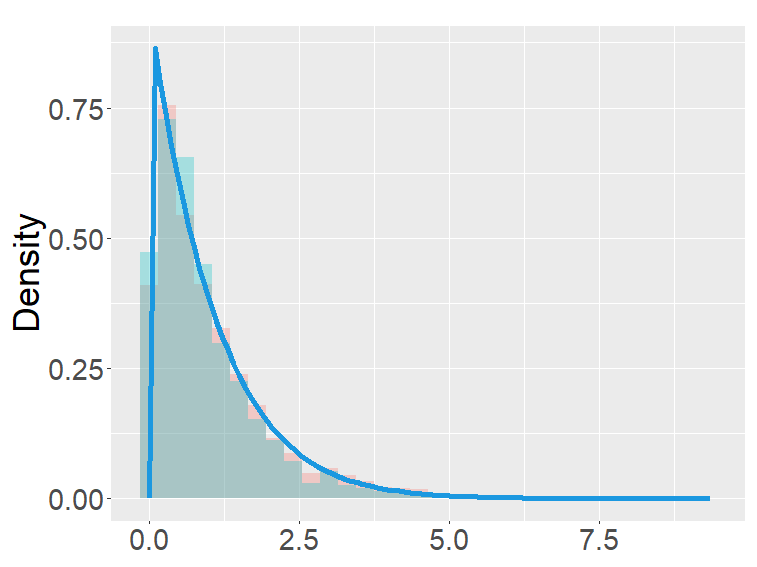}

\includegraphics[width=.4\textwidth]{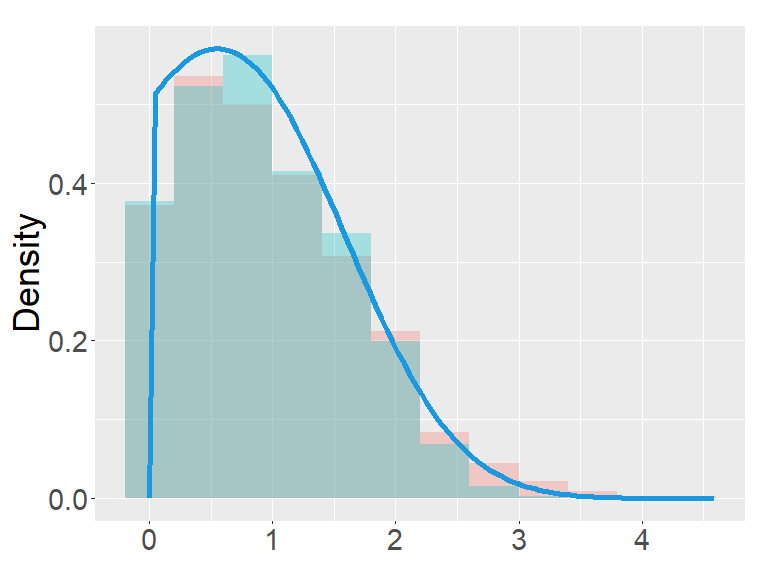}
\includegraphics[width=.4\textwidth]{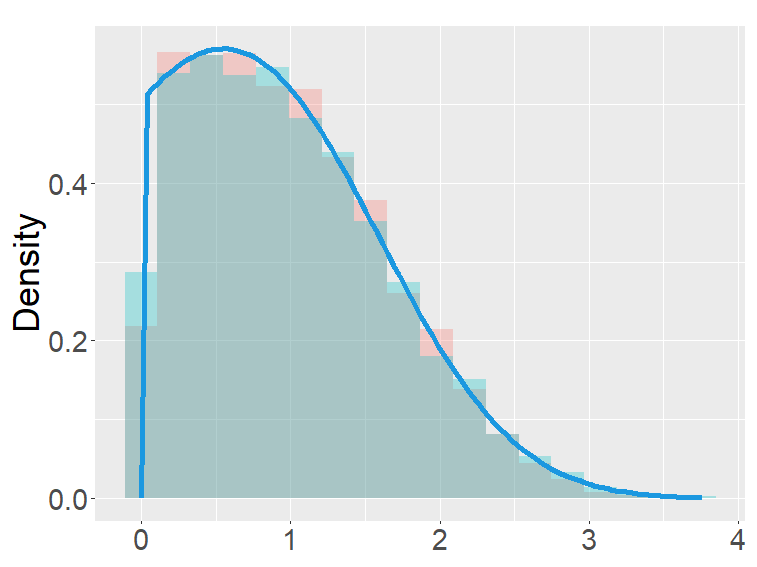}

\includegraphics[width=.4\textwidth]{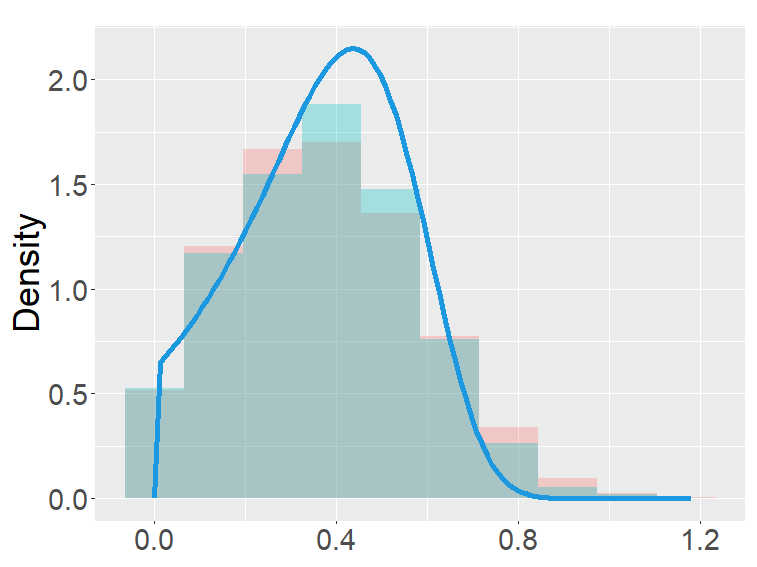}
\includegraphics[width=.4\textwidth]{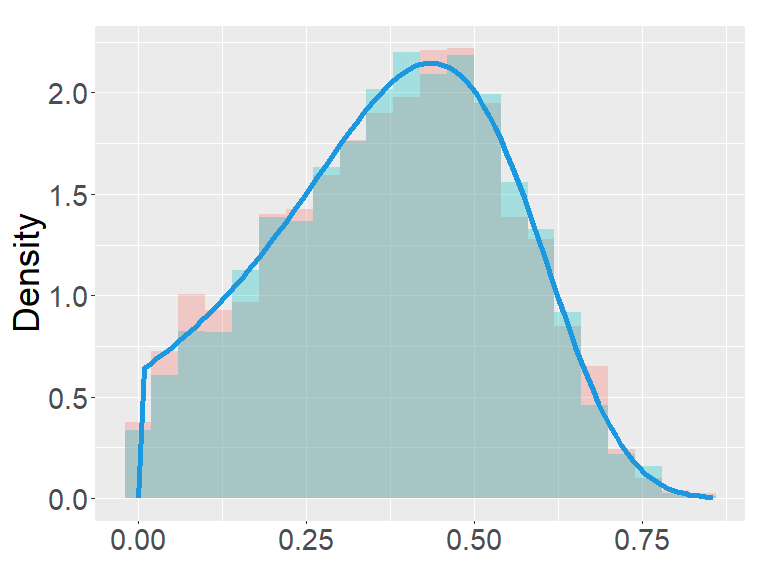} 

\includegraphics[width=1\textwidth]{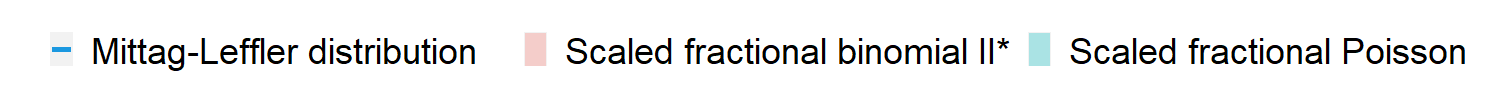}

\caption{The first row: $(H,c)=(.6,.2)$  with $n=50$ (left), 1000 (right),\\
the second row: $(H,c)=(.8,.6)$ with $n=50$ (left), 1000 (right),\\
the third row: $(H,c)=(.9,.3)$ with $n=50$ (left), 1000 (right). }
\label{Figure 1}
\end{figure}

\begin{figure}
    \centering
    {\includegraphics[width=0.4\textwidth]{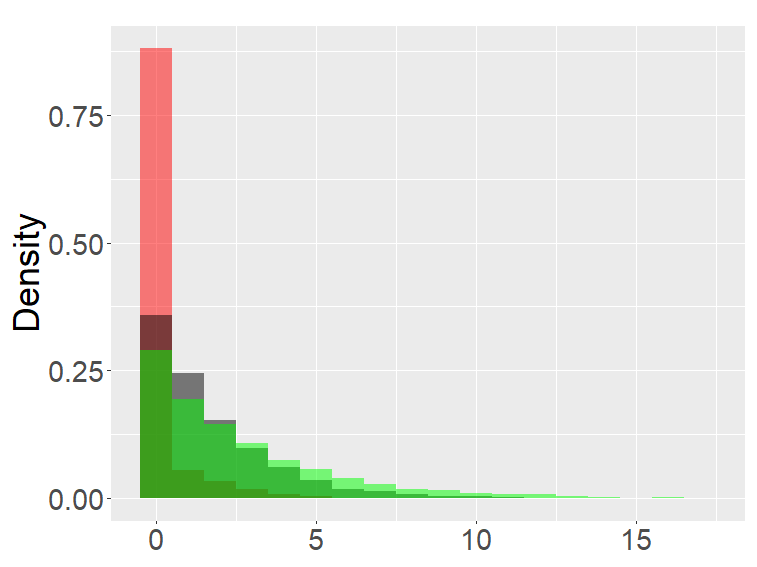}} 
   {\includegraphics[width=0.4\textwidth]{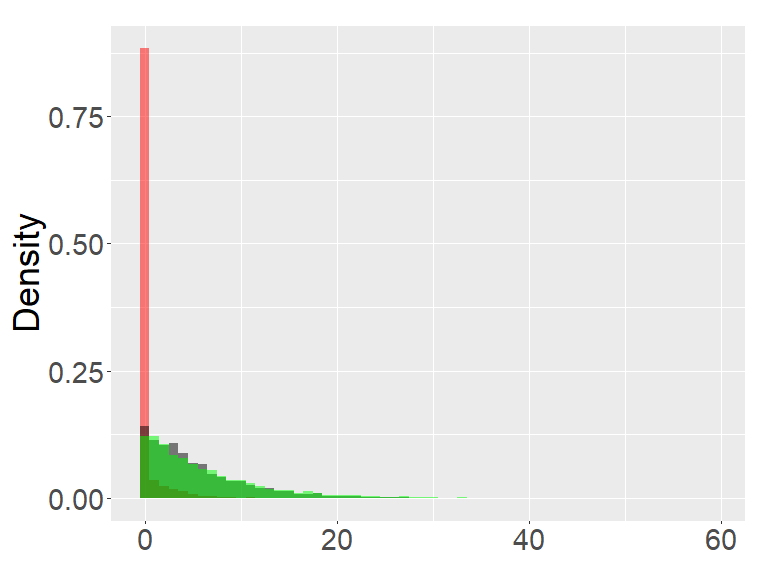}} 
{\includegraphics[width=0.4\textwidth]{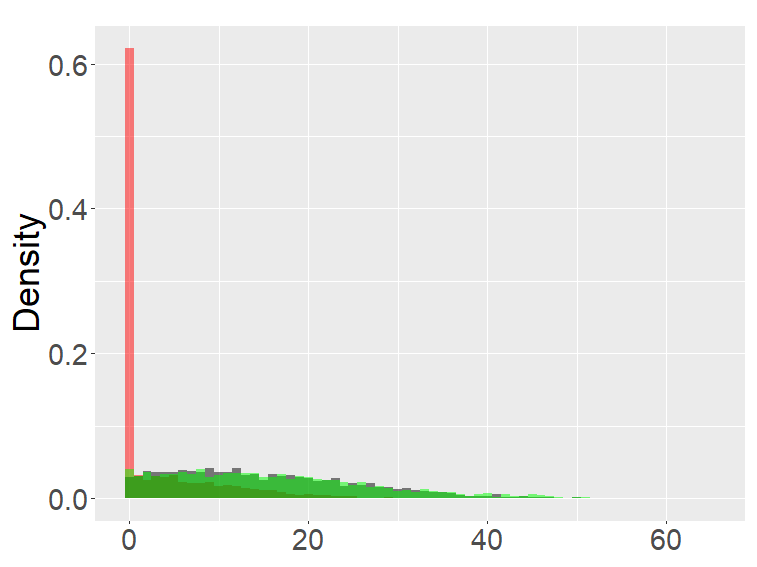}}
{\includegraphics[width=0.4\textwidth]{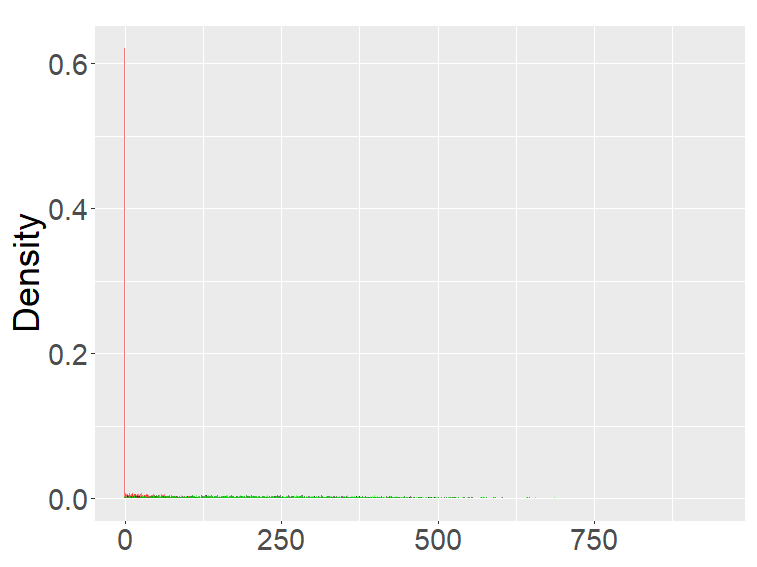}}


   {\includegraphics[width=0.4\textwidth]{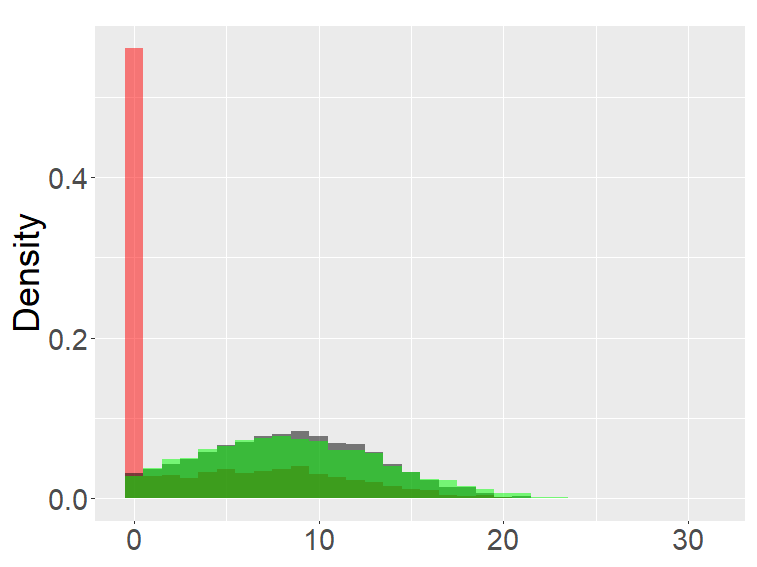}}
{\includegraphics[width=0.4\textwidth]{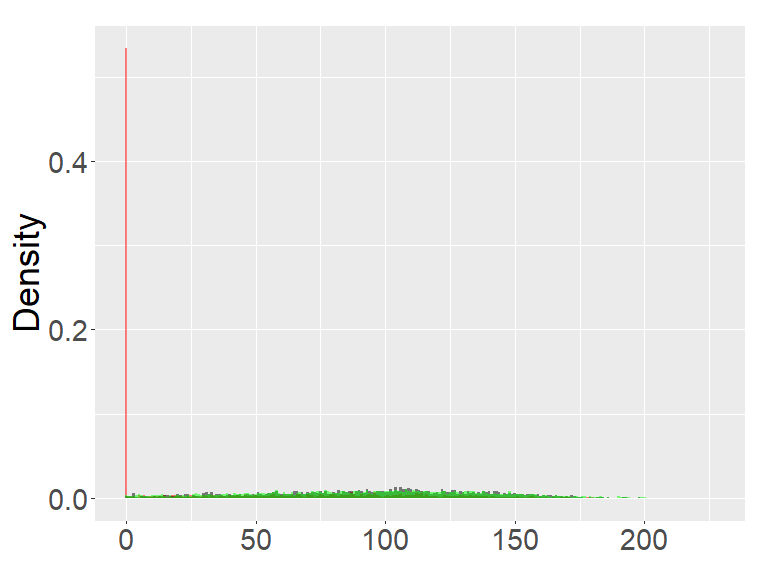}}
    \includegraphics[width=1\textwidth]{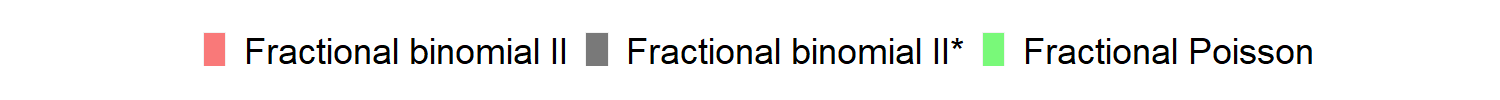}
  \caption{The first row: $(H,c)=(.6,.2)$  with $n=50$ (left), 1000 (right),\\
the second row: $(H,c)=(.8,.6)$ with $n=50$ (left), 1000 (right),\\
the third row: $(H,c)=(.9,.3)$ with $n=50$ (left), 1000 (right). 
($\lambda=c/2$ for all graphs.)}
\label{Figure 2}
\end{figure}

\begin{figure}
    \centering
   {\includegraphics[width=0.4\textwidth]{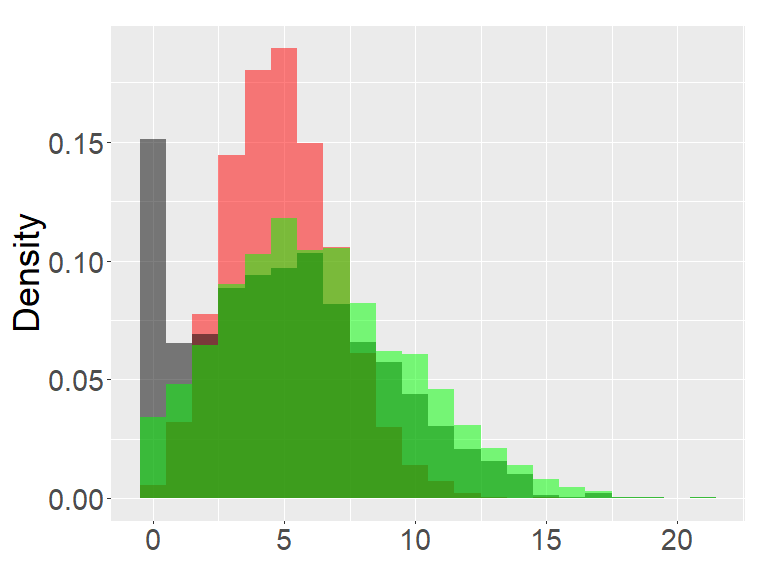}} 
    {\includegraphics[width=0.4\textwidth]{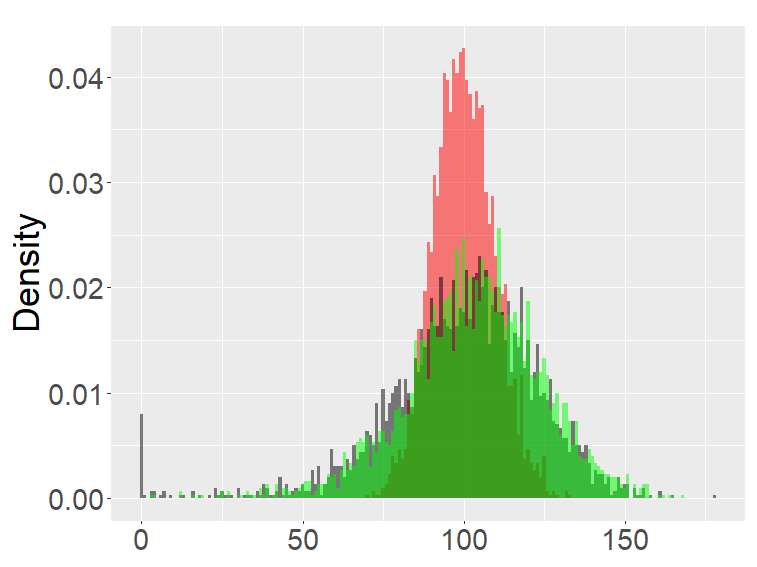}} 
{\includegraphics[width=0.4\textwidth]{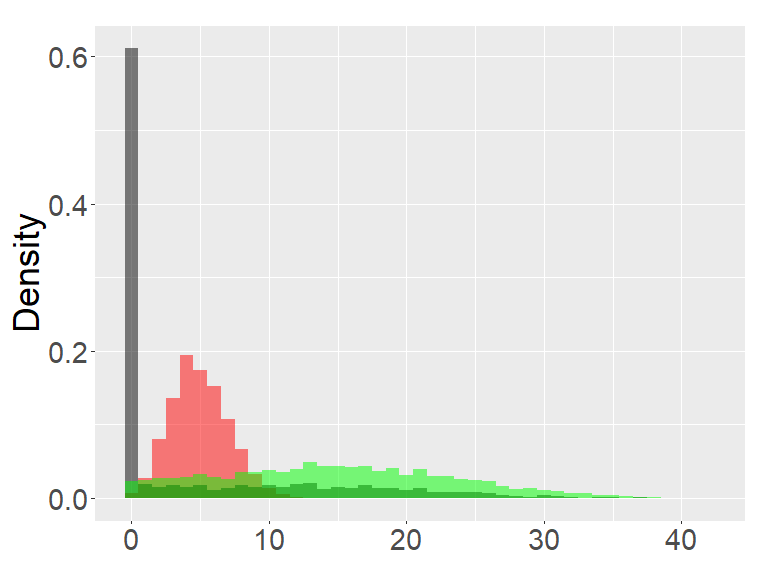}}
{\includegraphics[width=0.4\textwidth]{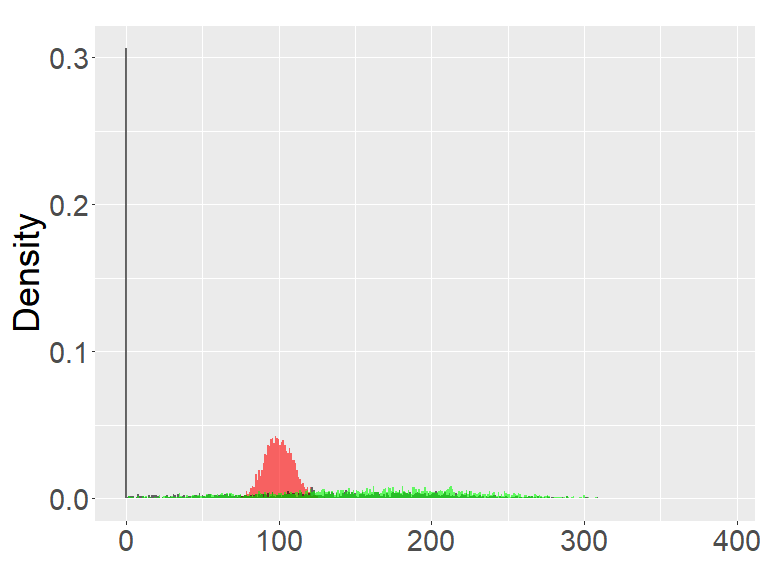}}


  {\includegraphics[width=0.4\textwidth]{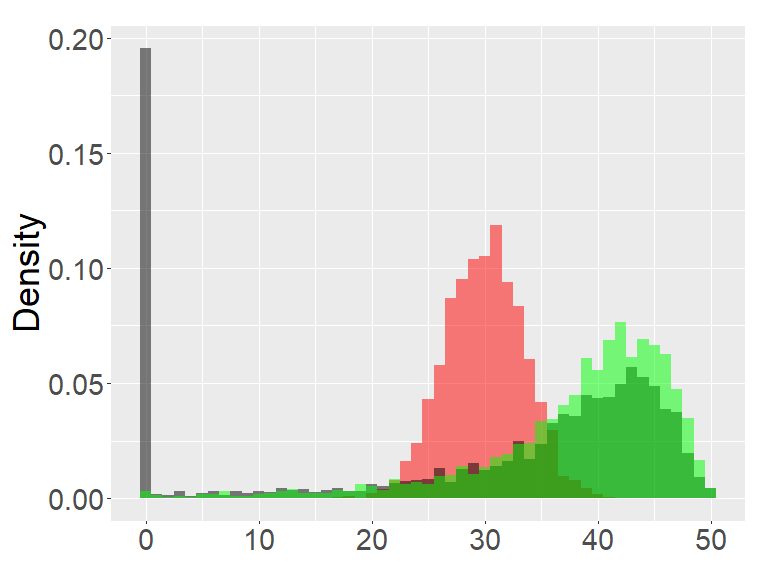}}
  {\includegraphics[width=0.4\textwidth]{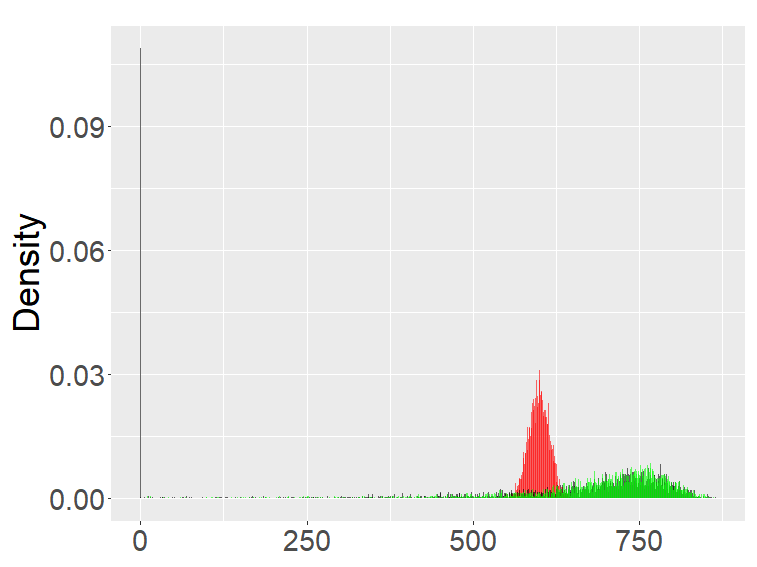}}
   \includegraphics[width=1\textwidth]{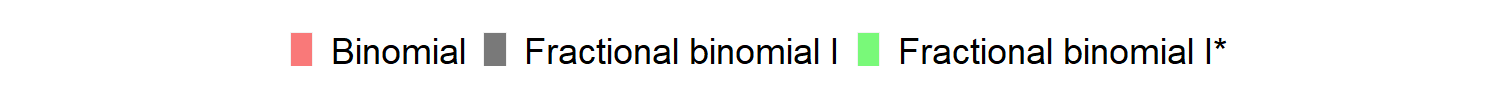}
  \caption{The first row: $(p,H,c)=(.1, .6, .2)$ with $n=50$ (left), 1000 (right),\\
the second row: $(p,H,c)=(.1, .8, .6)$ with $n=50$ (left), 1000 (right), \\
the third row: $(p,H,c)=(.6, .9, .3)$ with $n=50$ (left), 1000 (right).} \label{Figure 3}
\end{figure}

\begin{figure}
\centering
\includegraphics[width=.45\textwidth]{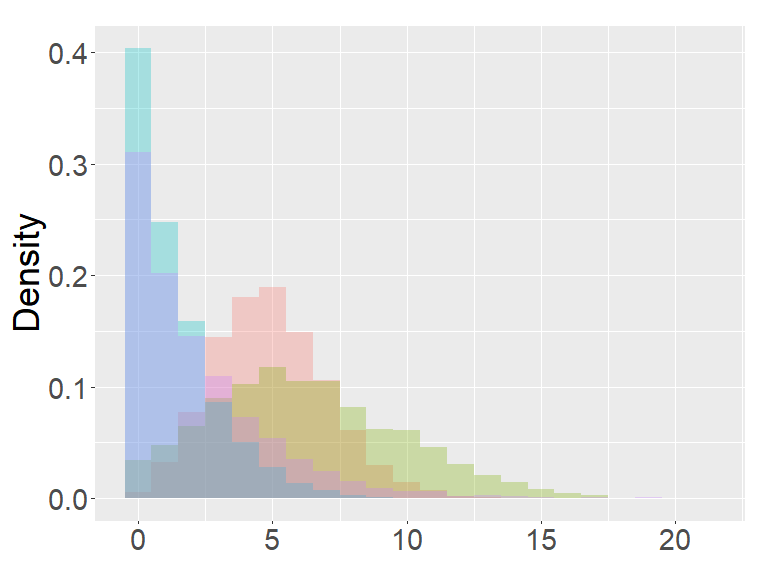}\includegraphics[width=.45\textwidth]{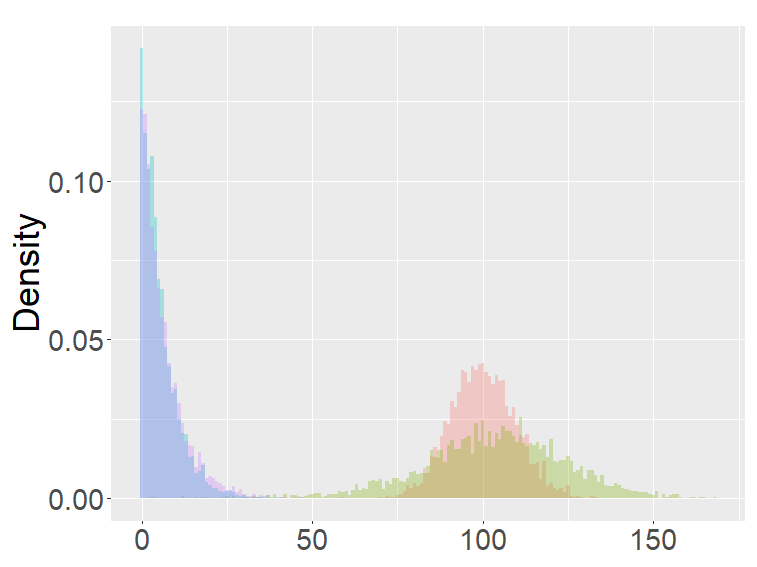}

\includegraphics[width=.45\textwidth]{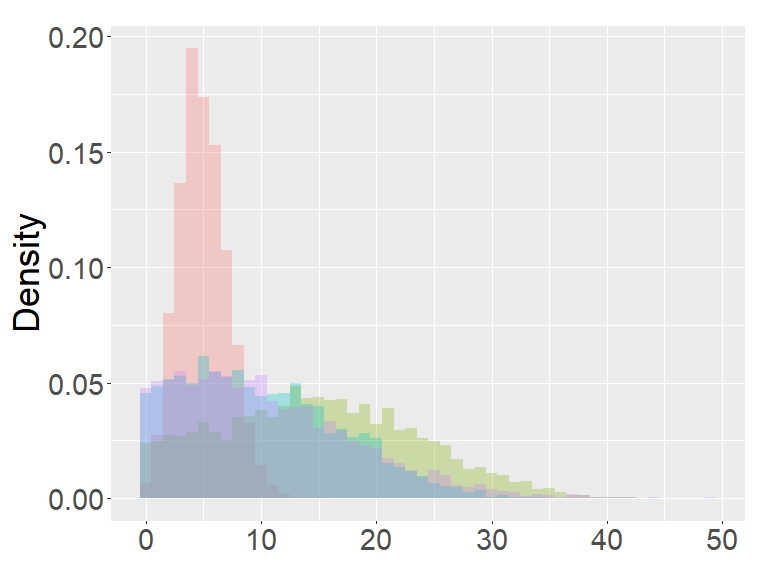}
\includegraphics[width=.45\textwidth]{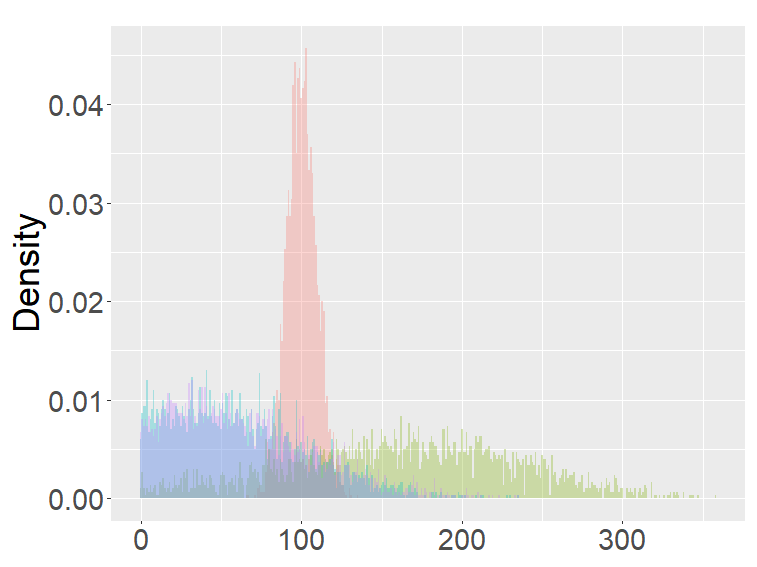}

\includegraphics[width=.45\textwidth]{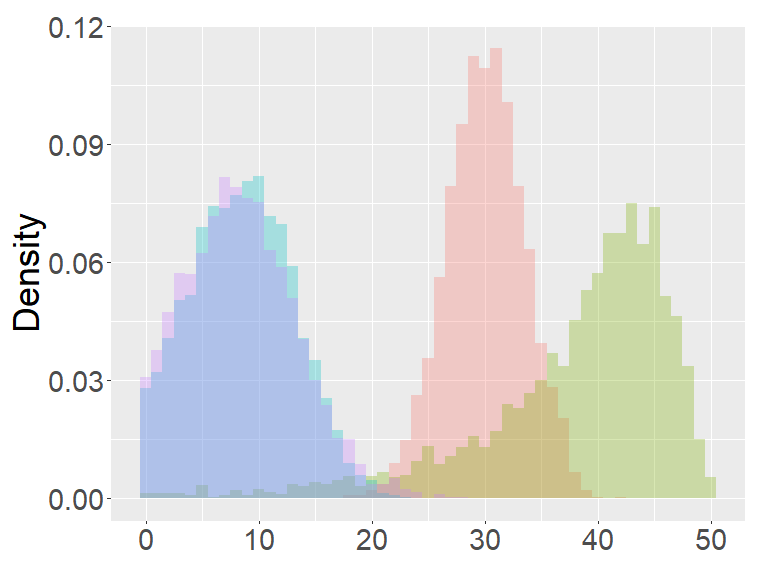}
\includegraphics[width=.45\textwidth]{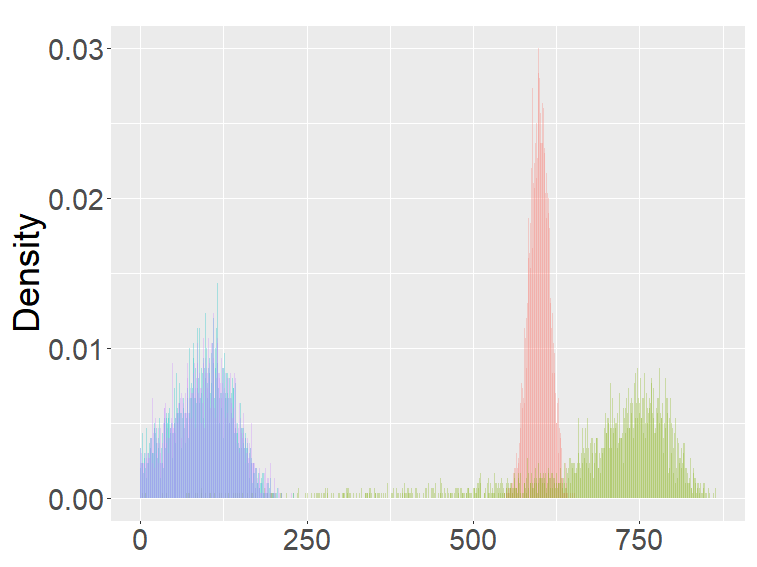} 
\includegraphics[width=1\textwidth]{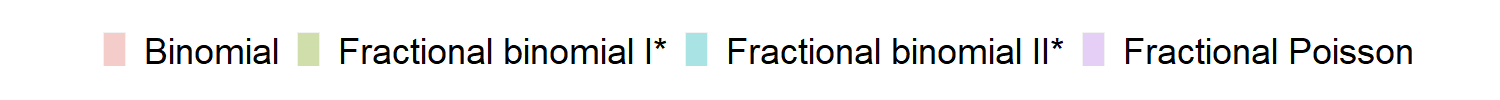} 
\caption{The first row: $(p,H,c)=(.1, .6, .2)$ with $n=50$ (left), 1000 (right),\\
the second row: $(p,H,c)=(.1, .8, .6)$ with $n=50$ (left), 1000 (right), \\
the third row: $(p,H,c)=(.6, .9, .3)$ with $n=50$ (left), 1000 (right).}
\label{Figure 4}
\end{figure}
\newpage

\section{Applications}
\subsection{ Application of GBP}
The monthly unemployment rate (seasonally adjusted, percent) from  January 1948 to December 2022 was obtained from the U.S. Bureau of Labor Statistics (\url{https://fred.stlouisfed.org/series/UNRATE}) and shown in the left graph of Figure 5. Using the decomposition method,  the trend of the time series of the unemployment rate was extracted, and the remaining component of the detrended unemployment rate was used for data analysis. With the detrended time series, a sequence of indicator variables was made to specify if the detrended unemployment rate is above a cutoff (indicator 1) or not (indicator 0) at each time. We tried three cutoffs, .08\%, .21\%, .28\%, and three indicator sequences were obtained. In Figure 5, the graph on the right shows the detrended unemployment rates and horizontal lines at the three cutoffs. 
\begin{figure}[htp]
    \centering
\includegraphics[width=.39\textwidth]{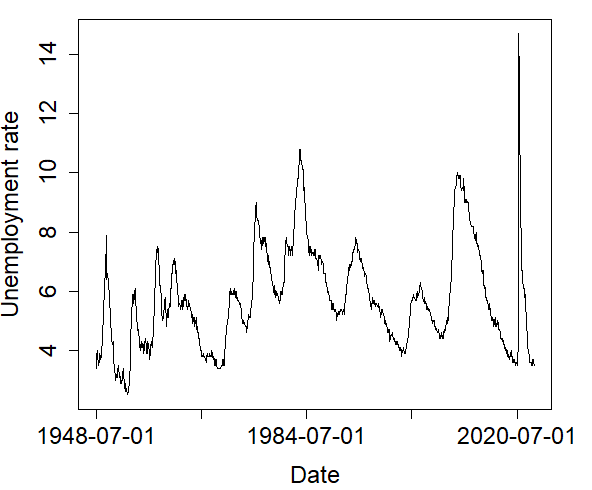}
\includegraphics[width=.39\textwidth]{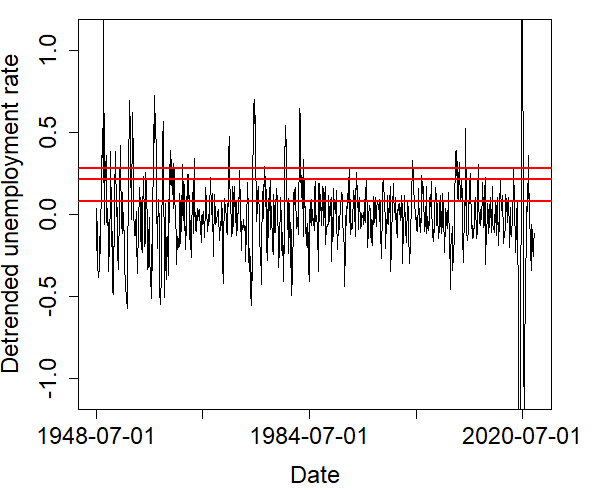} 
 \caption{Left: monthly unemployment rate (percent), Right:  detrended unemployment rate with horizontal line at .08\%, .21\%, .28\%.  }
 \end{figure} 
For each indicator sequence, we checked its autocorrelation in the left graphs of Figure 6.  It is observed that in the indicator sequence with cutoff .08, the correlation decreases relatively fast, and as the cutoff increases, the decay rate of the correlation slows down. 
 \begin{figure}  
\centering
\includegraphics[width=.35\textwidth]{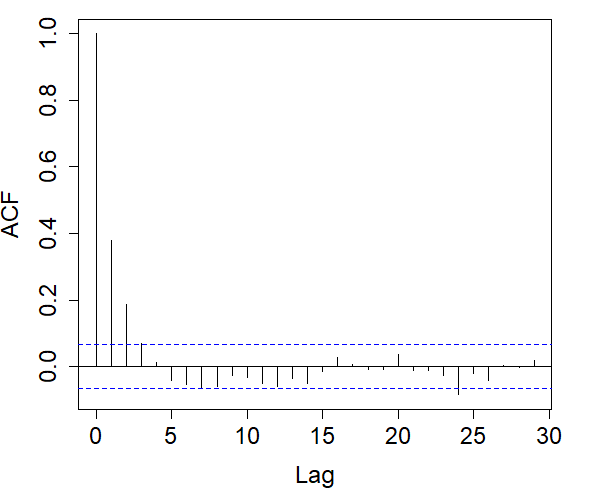} 
\includegraphics[width=.35\textwidth]{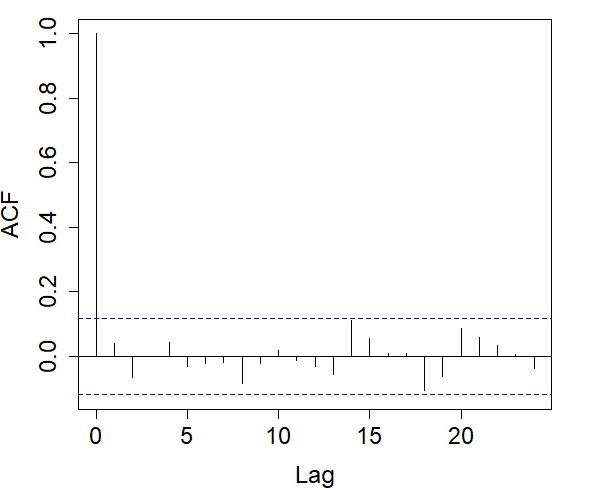} \vspace{.1cm}\\ 
 \includegraphics[width=.35\textwidth]{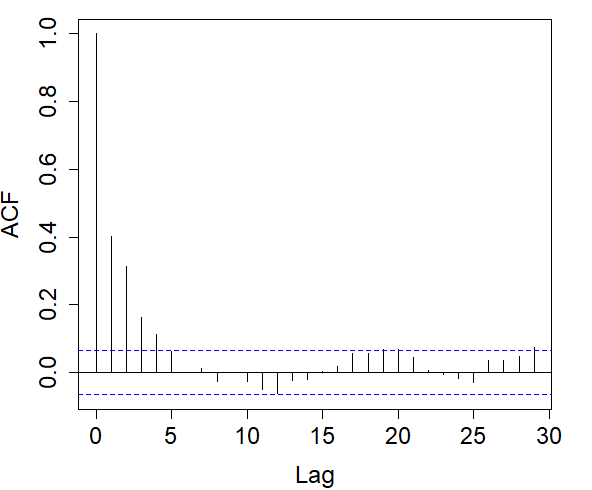}
 \includegraphics[width=.35\textwidth]{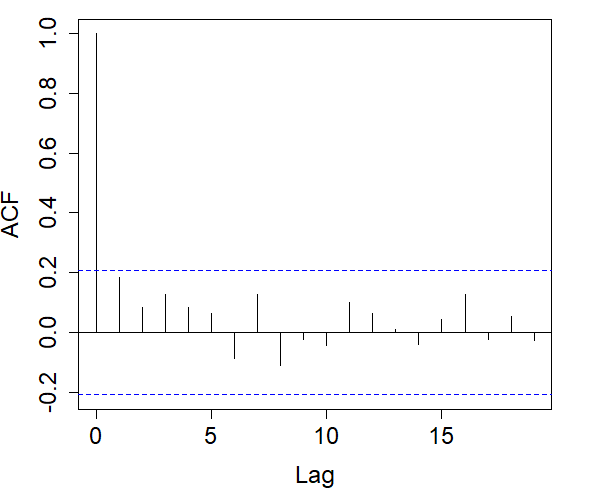} \vspace{.1cm}\\
 \includegraphics[width=.35\textwidth]{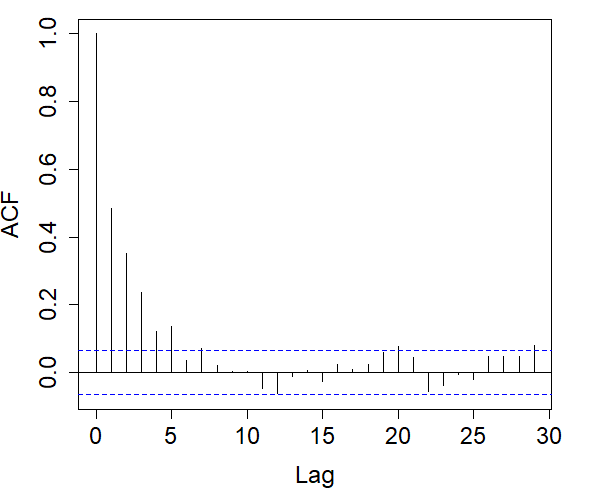}
 \includegraphics[width=.35\textwidth]{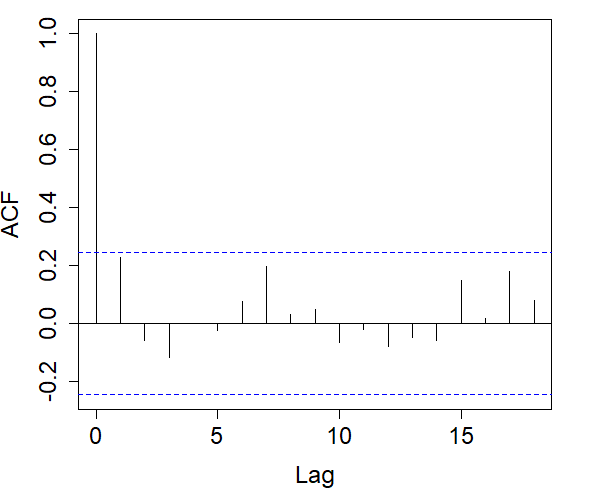}
    \caption{Left: Autocorrelograms of the indicator sequence of detrended unemployment rate with cutoff .08\% (top), .21\% (middle), .28\% (bottom).\\
    Right: Autocorrelograms of return times in the indicator sequence with cutoff .08\% (top), .21\% (middle), .28\% (bottom). }
    \label{fig:my_label}
\end{figure}

The GBP-I, GBP-II, Bernoulli process, and Markov chains were applied to each indicator sequence. In each of the four stochastic models, the return times, times between successive 1's, are independent and identically distributed as
\begin{align}
& P(\text{return time}=k) \nonumber \\&=\begin{dcases}
L_H(\{1,2\})I_{\{k=1\}}+I_{\{k>1\}}D_H(\{1, k+1\},\{2,3,\cdots, k\} ) &\text{ in  GBP-I,} \\cL_H^{\circ}(\{1,2\})I_{\{k=1\}} +I_{\{k>1\}}cD_H^{\circ}(\{1, k+1\},\{2,3,\cdots, k\} ) 
&\text{ in  GBP-II,}\\ (1-p)^{k-1} p &\text{ in Bernoulli process,} \\
pI_{\{k=1\}} + I_{\{k>1\}}(1-p) (1-q)^{k-2}q &\text{ in Markov chain,}
\end{dcases}     
\end{align}
where $k\in \mathbb{N}$, 
 $p, q \in (0,1)$, and $I_{\{k=0\}}, I_{\{k>0\}}$ are indicator variables.

 In each indicator sequence, we checked whether the return times were correlated. Autocorrelograms on the right in Figure 6 show that at a 5\% significance level, there is no significant evidence that return times are correlated. Therefore, we could proceed to fit each of the distributions in (7) to the return times of each indicator sequence.

   The results on the MLE of the parameters and the AIC of each model are shown in Tables 1-3. From Table 1, when the cutoff is .08, the Markov chain shows the best fit, having the smallest AIC, followed by the GBP-I. 
Table 2 shows that with the cutoff of .21, GBP-I has the smallest AIC, although the difference from the second smallest AIC from the Markov chain is very small. 
In Table 3, when the cutoff is .28, the GBP-I best fits the data, followed by the GBP-II and the Markov chain, in that order. The graphs in Figure 7 show the histogram of the return time overlaid with the fitted probability models.

\begin{table}[htp] \centering
\begin{tabular}{ |p{3cm}||p{2cm}|p{3cm}|p{2cm}| }
 \hline
 Probability model& Parameters&MLE& AIC
\\ \hline GBP-I  & ($p,H,c$)& (.30, .11, .23) & 974.68\\
GBP-II &($H,c$) &  (.85, .56) & 1027.04\\
Bernoulli process & $p$&.31 &  1079.06 \\
Markov chain & ($p,q$) &(.57, .19)&   961.55 \\
 \hline
\end{tabular}
\caption{Indicator sequence with the cutoff  .08}

\begin{tabular}{ |p{3cm}||p{2cm}|p{3cm}|p{2cm}| }
 \hline
 Probability model& Parameters&MLE& AIC\\
 \hline
 GBP-I  & ($p,H,c$)& (.09, .46, .33) & 479.76\\
GBP-II &($H,c$) &  (.77, .45) & 496.18\\
Bernoulli process & $p$&.10&  570.96 \\
Markov chain & ($p,q$) &(.47, .06)&  480.38 \\
 \hline
\end{tabular}
\caption{Indicator sequence with the cutoff  .21}

\begin{tabular}{ |p{3cm}||p{2cm}|p{3cm}|p{2cm}| }
 
 \hline
 Probability model& Parameters&MLE& AIC\\
 \hline
 GBP-I  & ($p,H,c$)& (.06, .58, .42) & 340.56\\
GBP-II &($H,c$) &  (.75, .53) & 346\\
Bernoulli process & $p$ &.07 &   458.44  \\
Markov chain & ($p,q$) &(.53, .04)& 348.41 \\
 \hline
\end{tabular}
\caption{Indicator sequence with the cutoff .28}
\label{table:4}
\end{table}

\begin{figure}
    \centering

\includegraphics[width=.45\textwidth]{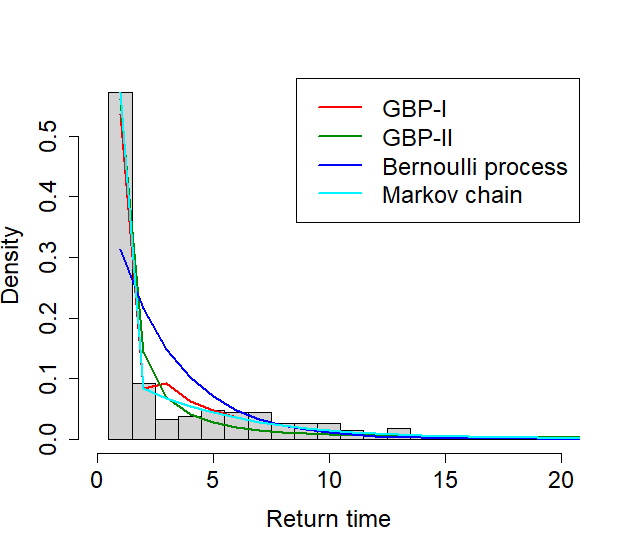}
\includegraphics[width=.45\textwidth]{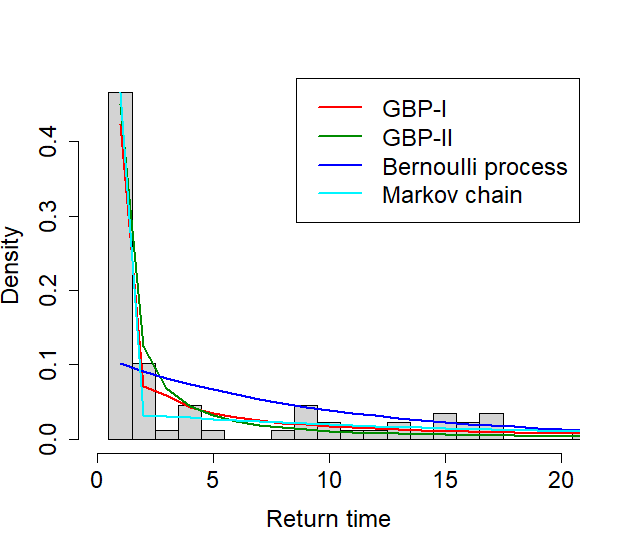}
\includegraphics[width=.45\textwidth]{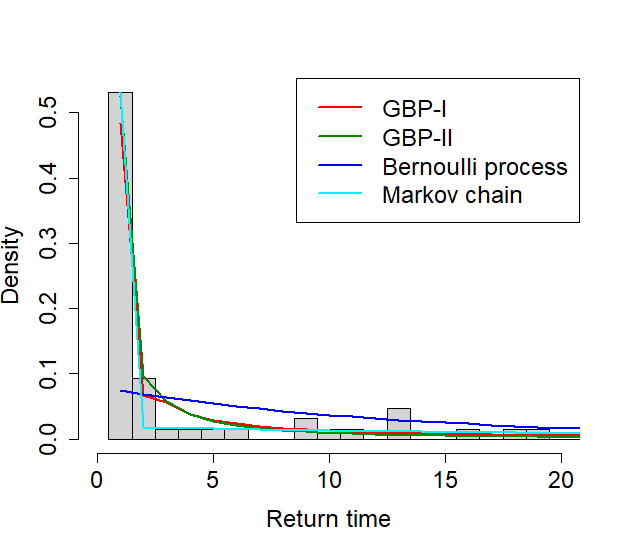}

    \caption{Data and fitted  distributions of the return time in the indicator sequence with cutoff .08 (top left), .21 (top right), .28 (bottom)}
    \label{fig:my_label}
\end{figure}

 From the results, it is observed that the relative performance of the GBP-I to the Markov chain improves as the cutoff increases.    It seems to be related to the fact that as the cutoff increases, the estimated parameter of $H$ in the GBP-I also increases. Especially, with the largest cutoff, the estimate of $H$  was $.58$, which indicates the presence of long-range dependence (LRD) in the indicator sequence. Since GBPs can incorporate LRD in a binary sequence unlike a Markov chain, it is not surprising that GBP shows a better fit than a Markov chain in the presence of LRD.

\subsection{Application of fractional binomial distribution}
We use a dataset in horticulture in \cite{Ridout1998ModelsFC}.  The dataset contains the number of roots produced by 270 micropropagated shoots of the columnar apple cultivar Trajans which were cultured under different experimental conditions, and the distribution of the number of roots is over-dispersed and has excess zeros.
In \cite{Ridout1998ModelsFC}, it was shown that zero-inflated Poisson (ZIP) and zero-inflated negative binomial (ZINB) models fitted the data better than Poisson and negative binomial models with covariates on the experimental conditions. 
Here we only use data on the number of roots without covariates, and fit fractional binomial models, ZIP, and ZINB.
The results on the MLE of parameters and AIC of each model are provided in Table 4.  The FB-I shows the lowest AIC, followed by the FB-II, zero-inflated models, FB-II$^*$, and FB-I$^*$, in that order.  Figure 8 shows the fitted distribution of each model with the data distribution. It was observed that FB-I and FB-II fitted the data well over the entire range of distribution, whereas the zero-inflated models overestimated the variable around the middle of the range and underestimated it in the latter half of the range. FB-I$^*$, II$^*$ show the worst fit to the data distribution, and also since the estimated $p$ is almost zero in FB-I$^*$, the fitted distribution of FB-I$^*$ and FB-II$^*$ becomes almost identical.

\begin{table}
\begin{tabular}{ |p{3cm}||p{2cm}|p{3cm}| p{2cm}|}
 
 \hline
 Probability model& Parameters&MLE&AIC\\
 \hline
 FB-I & ($p,H,c$)& (.30, .74 ,.31)  &  1348.44\\
 FB-II &($\lambda,H,c$) &  (.47, .92, .57)&   1350.1\\
 ZINB& $(\lambda,r, \pi) $ &(6.59, 9.97, .23)& 1358.6  \\
 ZIP& ($\lambda, \pi$) & (6.62, .24)&  1381.6  \\
  FB-II$^*$  & ($H,c$) &(.77, .70) & 1420.24\\
 FB-I$^*$ & ($p,H,c$)&(.00, .77, .70) & 1422.24  \\
 \hline
\end{tabular}
\caption{MLE and AIC for roots dataset}
\label{table:4}
\end{table}

\begin{figure}
    \centering
    \includegraphics[width=.5\linewidth]{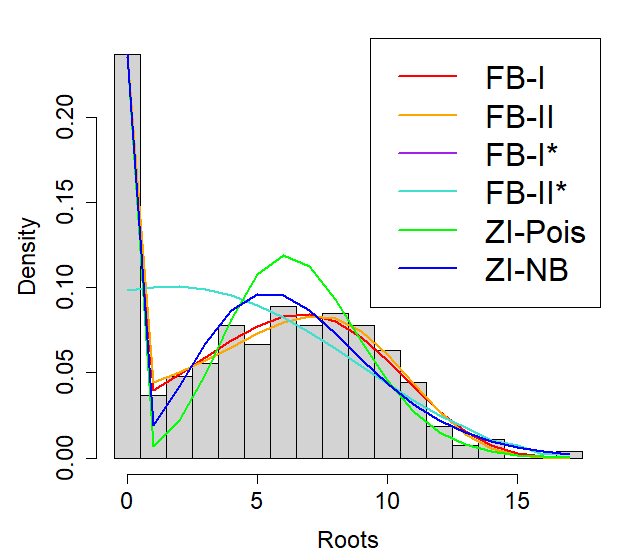}
    \caption{Fitted and data distribution on the number of roots}
    \label{fig:enter-label}
\end{figure}

\section{Conclusion}
We proposed generalized Bernoulli processes that are stationary binary sequences and found a connection to the fractional Poisson process. 
GBPs can possess long-range dependence, and the interarrival time of GBPs follows a heavy-tailed distribution.  Since a GBP can have the same scaling limit as the fractional Poisson process, it can be considered as a discrete-time analog of the fractional Poisson process.  
Fractional binomial distributions are defined from the sum in GBPs and possess various shapes, from highly skewed to flat.

 GBPs were applied to economic data with indicator variables, and the model fit of the GBPs was compared to the model fit of a Markov chain. It turned out that in the presence of LRD, the GBPs outperformed the Markov chain, which can be explained by the fact that the GBPs can incorporate LRD. It was noted that LRD appeared in the dataset when the higher cutoff was used for the indicator sequence, which suggests a  connection between rare events and LRD, and this shows potential for the applicability of GBPs for modeling LRD in rare events.

Fractional binomial models were applied to count data with excess zeros.
 It was shown that a fractional binomial model fitted the data better than zero-inflated models that are extensively used for overdispersed, excess zero count data. 

%
%

%

\begin{supplement}
\stitle{Generalized Bernoulli process and fractional Poisson process: supplemental document}
\sdescription{All the proofs of the proposition and theorems of this article are included in the supplementary material.}
\end{supplement} 

\begin{supplement}
\stitle{Computer code for simulations and data applications}
\sdescription{Code for simulations can be found in  J. Lee, frbinom, (2023), GitHub repository, https://github.com/leejeo25/frbinom.  Computer code used in the application of Section 5 is provided in 
J. Lee, GBP\_FB, (2023), GitHub repository, 
https://github.com/leejeo25/GBP\_FB}
\end{supplement} 


\bibliography{_11_16.bib}

\end{document}